\newtheorem{theorem}{Theorem}
\newtheorem{acknowledgement}[theorem]{Acknowledgement}
\newtheorem{corollary}[theorem]{Corollary}
\newtheorem{definition}[theorem]{Definition}
\newtheorem{lemma}[theorem]{Lemma}
\newtheorem{notation}[theorem]{Notation}
\newtheorem{proposition}[theorem]{Proposition}
\newtheorem{remark}[theorem]{Remark}
\newenvironment{proof}[1][Proof]{\noindent\textbf{#1.} }{\ \rule{0.5em}{0.5em}}
\begin{document}

\title{Homogenization of linear parabolic equations with a certain resonant
matching between rapid spatial and temporal oscillations in periodically
perforated domains}
\author{Tatiana Lobkova \\
%EndAName
Department of Quality Technology and Management, Mechanical\\
Engineering and Mathematics, Mid Sweden University, \\
S-83125 \"{O}stersund Sweden}
\date{}
\maketitle

\begin{abstract}
In this article, we study homogenization of a parabolic linear problem
governed by a coefficient matrix with rapid spatial and temporal
oscillations in periodically perforated domains with homogeneous Neumann
data on the boundary of the holes. We prove results adapted to the problem
for a characterization of multiscale limits for gradients and very weak
multiscale convergence.

Keywords: Homogenization, two-scale convergence, multiscale convergence,
periodically perforated domains

MSC 2000: 35B27; 35K10
\end{abstract}

\section{Introduction}

Homogenization theory deals with the question of finding effective
properties and microvariations in heterogeneous materials. However, it is
difficult to handle the rapid periodic oscillations of coefficients that
govern partial differential equations describing processes in such
materials. In this paper, we study a parabolic problem with a certain
resonant matching between rapid oscillations in space and time in
periodically perforated domains with homogeneous Neumann data on the
boundary of the holes. Homogenization for linear parabolic problems with
rapid oscillations of similar kind as in this paper was achived already by
Bensoussan, Lions and Papanicolaou in \cite{BLP} using asymptotic expansions
for domains without perforations. See also the pioneering work \cite{ColSpa}
from 1977 by Colombini and Spagnolo, where homogenization of linear
parabolic equations with rapid spatial oscillations is performed. A further
development of parabolic homogenization problems applying techniques of
two-scale convergence type was presented by Holmbom in 1997, see \cite{Ho},
where the first compactness result of very weak multiscale convergence type
was shown for one rapid scale in space and time each. A similar result with
the setting of $\Sigma $-convergence was obtained in 2007 by Nguetseng and
Woukeng in \cite{NgWo}. Later in \cite{FHOP1} from 2010 was proven a
compactness result for the case with $n$ well-separated spatial scales by
Flod\'{e}n et. al.. Multiscale convergence techniques for linear parabolic
problems for two rapid time scales with one of them identical to the single
rapid spatial scale were achived by Flod\'{e}n and Olsson in 2007, see \cite%
{FlOl}. In 2009, these results were extended by Woukeng, who studied
non-linear parabolic problems with the same choice of scales in \cite{Wo}.
Also \cite{Per1},\ by Persson, deals with monotone parabolic problems, but
with an arbitrary number of temporal microscales, where none of them has to
be identical with the rapid spatial scale or even has to be a power of $%
\varepsilon .$ In \cite{FHOLP} we return to the case of linear parabolic
homogenization for arbitrary numbers of spatial and temporal scales
benefitting from the concept of jointly separated scales introduced in \cite%
{Per3}.

Perforated domain means facing a further difficulty of a different kind than
for oscillating coefficients. Let $\Omega $ be a bounded domain in $%
%TCIMACRO{\U{211d} }%
%BeginExpansion
\mathbb{R}
%EndExpansion
^{N}$ and $\Omega _{\varepsilon }$ a corresponding perforated domain with
small holes which occur with a period $\varepsilon .$ Advanced extension
techniques to maintain a priori estimates in perforated domains for a linear
elliptic problem with Neumann data on the boundary of the perforations is
used by Cioranescu and Saint Jean Paulin in \cite{SrSj}. Examples of further
developed extension techniques for a larger class of perforated domains is
presented by Acerbi et. al. in \cite{ACDP}. In \cite{AlMr} and \cite{Al1}
Allaire developed methods which are independent of advanced extension
techniques. \cite{Al1} introduces methods of two-scale convergence type and
these methods we adapt to the time-dependent problem in the present paper.
With inhomogeneous Neumann data on the boundary of the holes the problem
becomes more complicated as the assumptions must be adapted to the fact that
the area of interface between holes and domain increases when $\varepsilon $
decreases, see \cite{CDDGZ}, \cite{CoDo}.

An early study of evolution problems in perforated domains is found in \cite%
{DN}, where a parabolic problem with fast oscillations in one spatial scale
is studied using advanced \ extension techniques and in a slightly more
general setting in \cite{DN1}. In 2016 Donato and Yang \cite{DoYa} performed
a generalization of \cite{DN} by using the time-dependent unfolding method
adapted to perforated domains. \cite{DN}, \cite{DN1} and \cite{DoYa} deal
with homogenous Neumann data on the boundary of the holes. See also \cite%
{AiNa}. In e.g. \cite{PiRy} the case of non-homogenous Neumann data is
studied for a nonlinear parabolic problem with oscillations in one rapid
spatial scale.

In this paper we study homogenization of the parabolic linear problem with
spatial and temporal oscillations%
\begin{eqnarray}
\partial _{t}u_{\varepsilon }(x,t)-\nabla \cdot \left( A\left( \frac{x}{%
\varepsilon },\frac{t}{\varepsilon ^{2}}\right) \nabla u_{\varepsilon
}(x,t)\right) &=&f_{\varepsilon }(x,t)\text{ in }\Omega _{\varepsilon
}\times (0,T)\text{,}  \notag \\
u_{\varepsilon }(x,t) &=&0\text{ on }\partial \Omega \times (0,T)\text{,}
\label{introdequat1} \\
A\left( \frac{x}{\varepsilon },\frac{t}{\varepsilon ^{2}}\right) \nabla
u_{\varepsilon }(x,t)\cdot n &=&0\text{ on }(\partial \Omega _{\varepsilon
}-\partial \Omega )\times (0,T)\text{,}  \notag \\
u_{\varepsilon }(x,0) &=&u_{\varepsilon }^{0}(x)\text{ in }\Omega
_{\varepsilon }\text{,}  \notag
\end{eqnarray}%
where$\ f_{\varepsilon }\in L^{2}(\Omega _{\varepsilon }\times (0,T))$ and $%
u_{\varepsilon }^{0}\in L^{2}(\Omega _{\varepsilon })$. We develop a method
without nontrivial extensions that generalize the approach in \cite{Al1} and
bring forth a gradient characterization adapted to the problem. In
particular, we show a result of very weak multiscale convergence type and
perform the homogenization procedure for the problem. \cite{DoWou} is
sharing similarities with our problem but includes methods that are based on
nontrivial extension techniques. See also \cite{DoTe}.

The paper is organized as follows. In Section 2 we give a brief description
of two-scale convergence and its generalization to cases with larger numbers
of scales, see Definition \ref{Definition2} and Definition \ref{Definition11}%
. We take a look at the recently developed idea of very weak multiscale
convergence, see Definition \ref{Definition13}. Section 3 is dedicated to
the multiscale convergence for sequences of time-dependent functions in
perforated domains and the answering concept for very weak multiscale
convergence with one rapid scale in space and time each. We define
perforated domains $\Omega _{\varepsilon }.$ Then we prove in Proposition %
\ref{P2,2} an essential result for the regularity of the $(2,2)$-scale limit
for bounded sequences in $L^{2}(0,T;H^{1}(\Omega _{\varepsilon }))$. In
Theorem \ref{first in line} we find a characterization of the (2,2)-scale
limit for $\left\{ \nabla u_{\varepsilon }\right\} $ under certain
assumptions and in Corollary \ref{Corr1} we consider a version of very-weak
multiscale convergence for the same choice of scales. Finally, in Section 4
we state a homogenization result which is proven by applying the results
from Section 3.

\begin{notation}
Below we introduce a list of sets and function spaces.
\end{notation}

$\Omega :$ Open bounded subset of $%
%TCIMACRO{\U{211d} }%
%BeginExpansion
\mathbb{R}
%EndExpansion
^{N}$ with a smooth boundary.

$\Omega _{\varepsilon }:$ A domain with small holes situated periodically in 
$\Omega $. See more about

notation for perforated domains in Section 3.

$Y:$ The unit cube $(0,1)^{N}$.

$Y^{\ast }:$ An open subset of $Y$.

$E_{\sharp }(Y^{\ast }):$ The $Y^{\ast }$ periodic extension of $Y^{\ast }$
infinitely along all principal

directions of $%
%TCIMACRO{\U{211d} }%
%BeginExpansion
\mathbb{R}
%EndExpansion
^{N}$.

$S:$ The intervall $(0,1)$.

$\mathcal{Y}_{n,m}:$ The set $Y^{n}\times S^{m}$.

$A,B:$ Subsets of $%
%TCIMACRO{\U{211d} }%
%BeginExpansion
\mathbb{R}
%EndExpansion
^{N}$.

$G(A):$ A space of real valued functions defined on $A$.

$G(A)/%
%TCIMACRO{\U{211d} }%
%BeginExpansion
\mathbb{R}
%EndExpansion
:$ The space of functions $\left\{ u\in G(A)\mid \int_{A}u(y)dy=0\right\} $.

$D(\Omega ):$ The space of $C^{\infty }(\Omega )$ - functions with compact
support in $\Omega $.

$C_{\sharp }^{\infty }(Y):$ The space of $Y$-periodic functions in $%
C^{\infty }(%
%TCIMACRO{\U{211d} }%
%BeginExpansion
\mathbb{R}
%EndExpansion
^{N})$.

$H_{\sharp }^{1}(Y):$\ The closure of $C_{\sharp }^{\infty }(Y)$ with
respect to the $H^{1}(Y)-$ norm.

$C_{\sharp }^{\infty }(Y^{\ast }):$ $C^{\infty }(E_{\sharp }(Y^{\ast }))$ -
functions that are periodic with respect to $Y^{\ast }$.

$D_{\sharp }(Y^{\ast }):$ The functions in $C_{\sharp }^{\infty }(Y^{\ast })$
with support contained in $E_{\sharp }(Y^{\ast })$.

$H_{\sharp }^{1}(Y^{\ast }):$ $H_{loc}^{1}(E_{\sharp }(Y^{\ast }))$ -
functions that are periodic with respect to $Y^{\ast }$.

$L^{2}(a,b;G(A)):$ The set of functions $\left\{ u:(a,b)\rightarrow G(A)\mid
\int_{a}^{b}\left\Vert u\right\Vert _{G(A)}^{2}dt<\infty \right\} $.

$D(B;G(A)):$ The space of infinitely differentiable functions $\left\{ u\mid
u:B\rightarrow G(A)\right\} $

with compact support in $B$.

\section{The two-scale convergence}

The concept was originally introduced by Nguetseng \cite{Ng} and later in
90's further developed by Allaire \cite{Al1}. A review of classical
two-scale convergence from 2002 can be found in \cite{LNW}. Yet it works for
more than two scales, see \cite{AlBr}. A quite attractive generalization of
two-scale convergence, scale convergence, is introduced by Mascarenhas and
Toader in \cite{MaTo}. Moreover, \cite{Si} adapted ideas of
scale-convergence from \cite{MaTo} and from a different kind of
generalization of two-scale convergence in \cite{HSSW} to develop the
concept of $\lambda $-scale convergence. See also \cite{Per2}. Moreover,
Nguetseng also introduced a quite sophisticated concept, $\sum $%
-convergence, which goes beyond the periodic setting, see e.g. \cite{NgSv}.
Another important improvement in the two-scale convergence theory was made
by Pak \cite{Pa} in 2005. He adapted it to differential forms and manifolds.
We would also like to mention \cite{CDG}.

Let us begin with the classical definition by Nguetseng and Allaire which
was shown for the case of bounded sequences in $L^{2}$.

\begin{definition}
\label{Definition2}We say that a bounded sequence of functions $\left\{
u_{\varepsilon }\right\} $ in $L^{2}(\Omega )$ two-scale converges to a
limit $u_{0}\in L^{2}(\Omega \times Y)$, if 
\begin{equation*}
\underset{\varepsilon \rightarrow 0}{\lim }\int_{\Omega }u_{\varepsilon
}(x)v\left( x,\frac{x}{\varepsilon }\right) dx=\int_{\Omega
}\int_{Y}u_{0}(x,y)v(x,y)dydx
\end{equation*}%
holds for all $v\in D(\Omega ;C_{\sharp }^{\infty }(Y))$. We write $%
u_{\varepsilon }\overset{}{\overset{2}{\underset{}{\rightharpoonup }}u_{0}}.$
If, in addition,%
\begin{equation*}
\underset{\varepsilon \rightarrow 0}{\lim }\left\Vert u_{\varepsilon
}(x)-u_{0}\left( x,\frac{x}{\varepsilon }\right) \right\Vert _{L^{2}(\Omega
)}=0\text{,}
\end{equation*}%
we say that $\left\{ u_{\varepsilon }\right\} $ two-scale converges strongly
to $u_{0}$.
\end{definition}

\begin{remark}
The strong two-scale convergence is also called a corrector type result,
according to the vocabulary of homogenization.
\end{remark}

\begin{theorem}
Two-scale limits are unique.
\end{theorem}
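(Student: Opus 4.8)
The plan is to prove uniqueness of two-scale limits in the standard way: suppose a bounded sequence $\{u_\varepsilon\}$ in $L^2(\Omega)$ two-scale converges to both $u_0$ and $\tilde u_0$ in $L^2(\Omega\times Y)$, and show $u_0=\tilde u_0$ a.e. Subtracting the two defining limit relations, for every admissible test function $v\in D(\Omega;C_\sharp^\infty(Y))$ one gets
\[
\int_\Omega\int_Y\bigl(u_0(x,y)-\tilde u_0(x,y)\bigr)v(x,y)\,dy\,dx=0,
\]
since both sides equal $\lim_{\varepsilon\to 0}\int_\Omega u_\varepsilon(x)v(x,x/\varepsilon)\,dx$. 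So the whole content of the proof is that this orthogonality against all such $v$ forces $u_0-\tilde u_0=0$.

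The key step is therefore a density argument: $D(\Omega;C_\sharp^\infty(Y))$ is dense in $L^2(\Omega\times Y)$. I would argue this by noting that finite linear combinations $\sum_i\varphi_i(x)\psi_i(y)$ with $\varphi_i\in D(\Omega)$ and $\psi_i\in C_\sharp^\infty(Y)$ are dense in $L^2(\Omega\times Y)$ — this follows because $D(\Omega)$ is dense in $L^2(\Omega)$, $C_\sharp^\infty(Y)$ is dense in $L^2(Y)$ (identifying $Y$-periodic smooth functions with smooth functions on the torus, whose density in $L^2$ of the torus is classical), and tensor products of dense subsets are dense in the $L^2$ of a product measure space. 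Such finite sums lie in $D(\Omega;C_\sharp^\infty(Y))$, so that space is dense in $L^2(\Omega\times Y)$. Then, since $w:=u_0-\tilde u_0\in L^2(\Omega\times Y)$ is orthogonal to a dense subspace, $w=0$, i.e. $u_0=\tilde u_0$ a.e.

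I would also record, as a preliminary remark needed to make the subtraction legitimate, that for any $v\in D(\Omega;C_\sharp^\infty(Y))$ the function $x\mapsto v(x,x/\varepsilon)$ is measurable and bounded with support in $\Omega$, so $\int_\Omega u_\varepsilon(x)v(x,x/\varepsilon)\,dx$ is well defined and the limit in Definition \ref{Definition2} makes sense; this is exactly what guarantees that the two limit expressions for $u_0$ and for $\tilde u_0$ are equal before passing to the difference.

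The main obstacle is purely the density statement for $D(\Omega;C_\sharp^\infty(Y))$ in $L^2(\Omega\times Y)$; everything else is a one-line subtraction. The density itself is routine but must be stated carefully, particularly the identification of $C_\sharp^\infty(Y)$ with $C^\infty$ of the $N$-torus and the use of the Stone–Weierstrass or Fourier-series argument for density there, together with the tensor-product density lemma in $L^2$ of a product space. No compactness or existence of two-scale limits is needed here — only the definition and this density fact.
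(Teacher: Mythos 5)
Your argument is correct and is precisely the standard reasoning the paper delegates to the reference (the discussion after Definition 1 in \cite{LNW}): subtract the two limit relations and use the density of $D(\Omega;C_{\sharp}^{\infty}(Y))$ in $L^{2}(\Omega\times Y)$, established via tensor products of dense subsets. Nothing is missing; the density step you flag as the only real content is exactly the crux, and your treatment of it is sound.
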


\begin{proof}
See reasoning after Definition 1 in \cite{LNW}.
\end{proof}

\begin{theorem}
\label{compactness T} Let $\left\{ u_{\varepsilon }\right\} $ be a bounded
sequence in $L^{2}(\Omega ).$ Then $\left\{ u_{\varepsilon }\right\} $ is
compact with respect to the two-scale convergence, i.e. there exist a
subsequence $\left\{ u_{\varepsilon ^{\prime }}\right\} $ two-scale
converging to a function $u_{0}\in L^{2}(\Omega \times Y).$
\end{theorem}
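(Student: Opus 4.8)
The plan is to prove the two-scale compactness theorem by the standard functional-analytic argument due to Nguetseng and Allaire, adapted to the notion of two-scale convergence from Definition \ref{Definition2}. The key observation is that two-scale convergence can be recast as ordinary weak-$*$ convergence of a family of linear functionals on a suitable separable Banach space, after which the Banach--Alaoglu theorem together with a density argument produces the desired limit.

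First I would introduce the test-function space $\mathcal{T} = L^{2}(\Omega; C_{\sharp}(Y))$ (or, more conveniently, a separable space such as $D(\Omega; C_{\sharp}^{\infty}(Y))$ equipped with an appropriate norm, whose completion is separable), and recall the admissibility estimate
\begin{equation*}
\left\Vert v\left(x,\tfrac{x}{\varepsilon}\right)\right\Vert_{L^{2}(\Omega)} \le C\,\left\Vert v\right\Vert_{\mathcal{T}}
\end{equation*}
valid uniformly in $\varepsilon$, which follows from the continuity of $v$ in the $y$ variable and the mean-value property of periodic functions. Given a bounded sequence $\{u_{\varepsilon}\}$ in $L^{2}(\Omega)$ with $\left\Vert u_{\varepsilon}\right\Vert_{L^{2}(\Omega)} \le C$, I would define for each $\varepsilon$ the functional $\mu_{\varepsilon}(v) = \int_{\Omega} u_{\varepsilon}(x)\, v\left(x,\tfrac{x}{\varepsilon}\right)\, dx$ on $\mathcal{T}$. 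By Cauchy--Schwarz and the admissibility estimate, $\left\vert \mu_{\varepsilon}(v)\right\vert \le C\left\Vert v\right\Vert_{\mathcal{T}}$, so $\{\mu_{\varepsilon}\}$ is a bounded sequence in the dual of a separable Banach space.

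Next I would apply the sequential Banach--Alaoglu theorem to extract a subsequence $\{\mu_{\varepsilon'}\}$ converging weakly-$*$ to some $\mu$ in $\mathcal{T}'$; passing to a further subsequence is not needed since separability already gives sequential compactness of bounded sets in the weak-$*$ topology. The limit $\mu$ is a bounded linear functional on $L^{2}(\Omega; C_{\sharp}(Y))$ with $\left\Vert \mu\right\Vert \le C$. The remaining point is to identify $\mu$ with integration against an $L^{2}(\Omega\times Y)$ function. For this I would invoke a Riesz-representation argument: $L^{2}(\Omega; C_{\sharp}(Y))$ is dense in $L^{2}(\Omega\times Y)$, and one shows $\mu$ extends to a bounded functional on $L^{2}(\Omega\times Y)$ by proving the a priori bound $\left\vert \mu(v)\right\vert \le C\left\Vert v\right\Vert_{L^{2}(\Omega\times Y)}$; this bound comes from choosing $v=v_{\varepsilon}$ cleverly, or more directly from the inequality $\limsup_{\varepsilon'\to 0}\left\Vert v\left(\cdot,\tfrac{\cdot}{\varepsilon'}\right)\right\Vert_{L^{2}(\Omega)} = \left\Vert v\right\Vert_{L^{2}(\Omega\times Y)}$ for admissible $v$, combined with $\left\vert \mu_{\varepsilon'}(v)\right\vert \le C\left\Vert v(\cdot,\tfrac{\cdot}{\varepsilon'})\right\Vert_{L^{2}(\Omega)}$. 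Once $\mu$ is realized as a functional on the Hilbert space $L^{2}(\Omega\times Y)$, Riesz representation yields $u_{0}\in L^{2}(\Omega\times Y)$ with $\mu(v)=\int_{\Omega}\int_{Y} u_{0}(x,y)v(x,y)\,dy\,dx$, and unwinding the definitions shows precisely that $u_{\varepsilon'}\overset{2}{\rightharpoonup} u_{0}$.

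The main obstacle is the identification step: establishing the sharp a priori bound $\left\vert\mu(v)\right\vert\le C\left\Vert v\right\Vert_{L^{2}(\Omega\times Y)}$ so that $\mu$ descends to the $L^{2}(\Omega\times Y)$ level. This rests on the mean-value lemma for periodic functions — the fact that $v\left(x,\tfrac{x}{\varepsilon}\right)$ converges weakly in $L^{2}(\Omega)$ to $\int_{Y} v(x,y)\,dy$ and that its norm converges to the full $L^{2}(\Omega\times Y)$ norm — applied to $v^{2}$ rather than $v$; some care is needed because $v^{2}$ must itself be an admissible test function, which holds when $v\in D(\Omega;C_{\sharp}^{\infty}(Y))$. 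The rest is routine: boundedness of the functionals, Banach--Alaoglu, and Riesz representation. For full details I would refer, as is customary, to the original arguments in \cite{Ng} and \cite{Al1}, or the exposition in \cite{LNW}.
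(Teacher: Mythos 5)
Your proposal is correct: it is the classical Nguetseng--Allaire duality argument, and all the essential ingredients are present --- the uniform bound $\vert\mu_{\varepsilon}(v)\vert\leq C\Vert v\Vert_{L^{2}(\Omega;C_{\sharp}(Y))}$ from the admissibility estimate, sequential Banach--Alaoglu in the dual of the separable test space, the sharpened bound $\vert\mu(v)\vert\leq C\Vert v\Vert_{L^{2}(\Omega\times Y)}$ obtained from $\lim_{\varepsilon\rightarrow0}\Vert v(\cdot,\cdot/\varepsilon)\Vert_{L^{2}(\Omega)}=\Vert v\Vert_{L^{2}(\Omega\times Y)}$, and finally density plus Riesz representation to produce $u_{0}$. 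You also correctly flag the one genuinely delicate point, namely that the norm-convergence step requires $\vert v\vert^{2}$ to be admissible, which is why one first works with $v\in D(\Omega;C_{\sharp}^{\infty}(Y))$ and only then extends by density. The paper itself supplies no proof --- it simply cites Theorem 4.1 of \cite{Ne} and, in a remark, notes that \cite{Al1} proves the result for the test space $L^{2}(\Omega;C_{\sharp}(Y))$. Since \cite{Ne} is explicitly devoted to \emph{alternative} approaches to two-scale convergence, the cited proof may well proceed differently, e.g.\ via a two-scale (unfolding-type) transform that maps $u_{\varepsilon}$ to a bounded sequence in $L^{2}(\Omega\times Y)$ and then invokes ordinary weak compactness there; that route trades the admissibility and Riesz-representation machinery for the bookkeeping of the transform, while your duality route is the canonical one matching the paper's Definition \ref{Definition2} and its remark on \cite{Al1}. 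Either way the statement is standard and your outline is sound; the only cosmetic slip is writing $\limsup$ where the limit in fact exists and equals $\Vert v\Vert_{L^{2}(\Omega\times Y)}$.
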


\begin{proof}
See proof of Theorem 4.1 in \cite{Ne}.
\end{proof}

The next theorem shows relations between norms for weak $L^{2}(\Omega )$%
-limits and two-scale limits.

\begin{theorem}
Let $\left\{ u_{\varepsilon }\right\} $ be a sequence in $L^{2}(\Omega )$
that two-scale converges to\newline
$u_{0}\in L^{2}(\Omega \times Y).$ Then%
\begin{equation*}
\lim_{\varepsilon \rightarrow 0}\inf \left\Vert u_{\varepsilon }\right\Vert
_{L^{2}(\Omega )}\geq \left\Vert u_{0}\right\Vert _{_{L^{2}(\Omega \times
Y)}}\geq \left\Vert u\right\Vert _{L^{2}(\Omega )}\text{,}
\end{equation*}%
where 
\begin{equation*}
u_{\varepsilon }(x)\rightharpoonup u(x)=\int_{Y}u_{0}(x,y)dy\text{ in }%
L^{2}(\Omega )\text{.}
\end{equation*}
\end{theorem}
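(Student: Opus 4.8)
The plan is to establish the two inequalities separately, using the definition of two-scale convergence together with classical density and lower-semicontinuity arguments. First I would prove the right-hand inequality $\left\Vert u_{0}\right\Vert _{L^{2}(\Omega \times Y)}\geq \left\Vert u\right\Vert _{L^{2}(\Omega )}$, which is the easy half. By definition $u(x)=\int_{Y}u_{0}(x,y)\,dy$, so for almost every $x$ the Cauchy--Schwarz inequality on $Y$ (a set of measure one) gives $|u(x)|^{2}=\left\vert \int_{Y}u_{0}(x,y)\,dy\right\vert ^{2}\leq \int_{Y}|u_{0}(x,y)|^{2}\,dy$. Integrating over $\Omega$ yields the claim. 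One should also check that $u_{\varepsilon }\rightharpoonup u$ weakly in $L^{2}(\Omega )$: testing the two-scale definition against $v(x,y)=\varphi (x)$ with $\varphi \in D(\Omega )$ (which is admissible, being independent of $y$) shows $\int_{\Omega }u_{\varepsilon }\varphi \,dx\to \int_{\Omega }\int_{Y}u_{0}(x,y)\varphi (x)\,dy\,dx=\int_{\Omega }u(x)\varphi (x)\,dx$, and since $\{u_{\varepsilon }\}$ is bounded in $L^{2}(\Omega )$ this weak convergence extends to all test functions in $L^{2}(\Omega )$ by density.

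Next I would prove the left-hand inequality $\liminf_{\varepsilon \to 0}\left\Vert u_{\varepsilon }\right\Vert _{L^{2}(\Omega )}\geq \left\Vert u_{0}\right\Vert _{L^{2}(\Omega \times Y)}$. The standard trick is to expand, for an arbitrary test function $v\in D(\Omega ;C_{\sharp }^{\infty }(Y))$,
\begin{equation*}
0\leq \int_{\Omega }\left( u_{\varepsilon }(x)-v\left( x,\tfrac{x}{\varepsilon }\right) \right) ^{2}dx=\int_{\Omega }u_{\varepsilon }(x)^{2}\,dx-2\int_{\Omega }u_{\varepsilon }(x)v\left( x,\tfrac{x}{\varepsilon }\right) dx+\int_{\Omega }v\left( x,\tfrac{x}{\varepsilon }\right) ^{2}dx.
\end{equation*}
Letting $\varepsilon \to 0$, the middle term converges to $2\int_{\Omega }\int_{Y}u_{0}v\,dy\,dx$ by the definition of two-scale convergence, and the last term converges to $\int_{\Omega }\int_{Y}v^{2}\,dy\,dx$ by the classical mean-value property for periodic functions (applied to $v^{2}$, which lies in the same admissible class). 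Hence $\liminf_{\varepsilon \to 0}\left\Vert u_{\varepsilon }\right\Vert _{L^{2}(\Omega )}^{2}\geq 2\int_{\Omega }\int_{Y}u_{0}v\,dy\,dx-\int_{\Omega }\int_{Y}v^{2}\,dy\,dx$ for every such $v$.

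To conclude, I would take a sequence $v_{k}\in D(\Omega ;C_{\sharp }^{\infty }(Y))$ with $v_{k}\to u_{0}$ in $L^{2}(\Omega \times Y)$, which is possible since this test-function space is dense in $L^{2}(\Omega \times Y)$. Passing to the limit in $k$ in the inequality above, the right-hand side becomes $2\left\Vert u_{0}\right\Vert _{L^{2}(\Omega \times Y)}^{2}-\left\Vert u_{0}\right\Vert _{L^{2}(\Omega \times Y)}^{2}=\left\Vert u_{0}\right\Vert _{L^{2}(\Omega \times Y)}^{2}$, giving $\liminf_{\varepsilon \to 0}\left\Vert u_{\varepsilon }\right\Vert _{L^{2}(\Omega )}^{2}\geq \left\Vert u_{0}\right\Vert _{L^{2}(\Omega \times Y)}^{2}$, which is the desired bound after taking square roots. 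The main obstacle, such as it is, lies in justifying the two limit passages in the expansion: the convergence of $\int_{\Omega }v(x,x/\varepsilon )^{2}\,dx$ requires that $v^{2}$ (or a suitable approximation) be an admissible test function and that the periodic mean-value lemma applies, and the final density step requires knowing that $D(\Omega ;C_{\sharp }^{\infty }(Y))$ is dense in $L^{2}(\Omega \times Y)$ — both are standard in two-scale convergence theory but deserve explicit citation, e.g. to \cite{LNW} or \cite{Al1}.
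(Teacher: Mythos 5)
Your proof is correct and is exactly the standard argument: Cauchy--Schwarz (Jensen) over the unit cell for the second inequality, and the expansion of $\int_{\Omega }(u_{\varepsilon }-v(x,x/\varepsilon ))^{2}dx\geq 0$ combined with the mean-value property for admissible oscillating test functions and density of $D(\Omega ;C_{\sharp }^{\infty }(Y))$ in $L^{2}(\Omega \times Y)$ for the first. The paper itself gives no proof, only the citation to Theorem 10 of \cite{LNW}, and the argument there is essentially the one you reproduce.
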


\begin{proof}
See Theorem 10 in \cite{LNW}.
\end{proof}

Phenomenons of two-scale \ convergence type may appear under certain
conditions also when neither of the involved sequences originates from an
admissible test function.

\begin{theorem}
\label{important} Let $\left\{ u_{\varepsilon }\right\} $ be a bounded
sequence in $L^{2}(\Omega )$ which two-scale converges to $u_{0}\in
L^{2}(\Omega \times Y)$ and assume that 
\begin{equation*}
\lim_{\varepsilon \rightarrow 0}\left\Vert u_{\varepsilon }(x)\right\Vert
_{L^{2}(\Omega )}=\left\Vert u_{0}(x,y)\right\Vert _{L^{2}(\Omega \times Y)}%
\text{.}
\end{equation*}%
Then%
\begin{equation*}
\lim_{\varepsilon \rightarrow 0}\int_{\Omega }u_{\varepsilon
}(x)v_{\varepsilon }(x)\tau (x)dx=\int_{\Omega }\int_{Y}u_{0}(x,y)v(x,y)\tau
(x)dydx
\end{equation*}%
for any $\tau \in D(\Omega )$.

\noindent Moreover, if the Y-periodic extension of $u_{0}$ belongs to $%
L^{2}(\Omega ;C_{\sharp }(Y))$, then%
\begin{equation*}
\lim_{\varepsilon \rightarrow 0}\left\Vert u_{\varepsilon }(x)-u_{0}\left( x,%
\frac{x}{\varepsilon }\right) \right\Vert _{L^{2}(\Omega )}=0\text{.}
\end{equation*}
\end{theorem}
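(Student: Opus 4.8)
The plan is to prove Theorem~\ref{important} in two stages, mirroring the structure of its statement: first the "weak" conclusion about the limit of $\int_\Omega u_\varepsilon v_\varepsilon \tau\,dx$ for arbitrary $\tau\in D(\Omega)$, and then the "strong" (corrector) conclusion under the extra continuity hypothesis on $u_0$. The underlying principle is that an equality of norms upgrades the class of admissible test functions from the smooth separated ones in Definition~\ref{Definition2} to products $v_\varepsilon(x)\tau(x)$ where $v_\varepsilon(x)=v(x,x/\varepsilon)$ for a possibly merely continuous $v$. I would phrase this as a density/approximation argument powered by the norm convergence, which plays the role of uniform integrability.

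\textbf{Stage 1: the weak conclusion.} First I would fix $\tau\in D(\Omega)$ and let $v\in L^2(\Omega;C_\sharp(Y))$ (the class for which $v_\varepsilon(x)=v(x,x/\varepsilon)$ is a legitimate strongly two-scale convergent sequence with $\|v_\varepsilon\|_{L^2(\Omega)}\to\|v\|_{L^2(\Omega\times Y)}$; this is the standard admissibility fact recorded implicitly before Definition~\ref{Definition2}). The key identity is the polarization-type expansion
\begin{equation*}
\int_\Omega \big(u_\varepsilon(x)-u_0(x,\tfrac{x}{\varepsilon})\big)\big(v_\varepsilon(x)-v_0(x,\tfrac{x}{\varepsilon})\big)\tau(x)\,dx,
\end{equation*}
where for a smooth separated $v$ one has $v_0(x,y)=v(x,y)$. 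Expanding the product gives four terms: $\int u_\varepsilon v_\varepsilon\tau$, which is what we want to control; $\int u_\varepsilon(x)v_0(x,\tfrac{x}{\varepsilon})\tau(x)\,dx$, which converges to $\int\int u_0 v_0\tau$ by the definition of two-scale convergence applied with test function $v_0\tau$ (after approximating $v_0$ by smooth separated functions); $\int u_0(x,\tfrac{x}{\varepsilon})v_\varepsilon(x)\tau(x)\,dx$, handled symmetrically since $u_0\in L^2(\Omega\times Y)$ so $x\mapsto u_0(x,x/\varepsilon)$ can be tested against $v_\varepsilon$; and $\int u_0(x,\tfrac{x}{\varepsilon})v_0(x,\tfrac{x}{\varepsilon})\tau(x)\,dx\to\int\int u_0 v_0\tau$ by the mean-value (oscillation) lemma. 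To show the cross term $\int\big(u_\varepsilon-u_0(x,\tfrac{x}{\varepsilon})\big)\big(v_\varepsilon-v_0(x,\tfrac{x}{\varepsilon})\big)\tau$ vanishes, I would use Cauchy--Schwarz: it is bounded by $\|\tau\|_\infty\big\|u_\varepsilon-u_0(x,\tfrac{x}{\varepsilon})\big\|_{L^2}\big\|v_\varepsilon-v_0(x,\tfrac{x}{\varepsilon})\big\|_{L^2}$. The second factor tends to $0$ by strong two-scale convergence of $v_\varepsilon$. The first factor need not tend to $0$, but it stays \emph{bounded}: expanding $\|u_\varepsilon-u_0(x,\tfrac{x}{\varepsilon})\|_{L^2}^2=\|u_\varepsilon\|_{L^2}^2-2\int u_\varepsilon u_0(x,\tfrac{x}{\varepsilon})+\|u_0(x,\tfrac{x}{\varepsilon})\|_{L^2}^2$, the first term converges to $\|u_0\|_{L^2(\Omega\times Y)}^2$ by hypothesis, the middle term converges to $2\|u_0\|_{L^2(\Omega\times Y)}^2$ by two-scale convergence (testing $u_\varepsilon$ against an approximation of $u_0$ itself, using density of smooth separated functions in $L^2(\Omega\times Y)$), and the last converges to $\|u_0\|^2_{L^2(\Omega\times Y)}$; so the whole expression tends to $0$ when $v$-type arguments are not needed, but in any case is bounded. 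Combining, the cross term $\to 0$, and rearranging the expansion yields $\int_\Omega u_\varepsilon v_\varepsilon\tau\to\int_\Omega\int_Y u_0 v_0\tau$, which after a final density pass ($v$ smooth separated $\Rightarrow$ $v_0=v$) is exactly the claim.

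\textbf{Stage 2: the corrector conclusion.} Now assume additionally that the $Y$-periodic extension of $u_0$ lies in $L^2(\Omega;C_\sharp(Y))$, so that $w_\varepsilon(x):=u_0(x,x/\varepsilon)$ is itself an admissible sequence with $\|w_\varepsilon\|_{L^2(\Omega)}\to\|u_0\|_{L^2(\Omega\times Y)}$ and $w_\varepsilon\overset{2}{\rightharpoonup}u_0$. Then I would simply expand
\begin{equation*}
\big\|u_\varepsilon - w_\varepsilon\big\|_{L^2(\Omega)}^2 = \|u_\varepsilon\|_{L^2(\Omega)}^2 - 2\int_\Omega u_\varepsilon(x)\,u_0\!\left(x,\tfrac{x}{\varepsilon}\right)dx + \|w_\varepsilon\|_{L^2(\Omega)}^2.
\end{equation*}
The first term tends to $\|u_0\|_{L^2(\Omega\times Y)}^2$ by the hypothesis of the theorem; the third tends to the same by admissibility of $w_\varepsilon$; and the middle term: since $u_0$ (as a continuous-in-$y$ function) is an admissible test function against the two-scale convergent sequence $\{u_\varepsilon\}$, we get $\int_\Omega u_\varepsilon(x)u_0(x,x/\varepsilon)\,dx\to\int_\Omega\int_Y u_0(x,y)^2\,dy\,dx=\|u_0\|_{L^2(\Omega\times Y)}^2$. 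Hence the whole sum tends to $\|u_0\|^2-2\|u_0\|^2+\|u_0\|^2=0$, giving strong two-scale convergence.

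\textbf{Main obstacle.} The delicate point throughout is the \emph{admissibility} of the test functions used in the two places where I "test $u_\varepsilon$ against $u_0$" (or against a merely continuous $v$): the definition of two-scale convergence only grants $\int u_\varepsilon v(x,x/\varepsilon)\to\int\int u_0 v$ for $v\in D(\Omega;C_\sharp^\infty(Y))$, so extending to $v\in L^2(\Omega;C_\sharp(Y))$ requires a density argument controlling $\int u_\varepsilon[v(x,x/\varepsilon)-\phi(x,x/\varepsilon)]$ uniformly in $\varepsilon$ by $\|u_\varepsilon\|_{L^2}\|v-\phi\|_{L^2(\Omega;C_\sharp(Y))}$ --- here boundedness of $\{u_\varepsilon\}$ in $L^2$ suffices. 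I would carry out this density step carefully once, as a lemma, and cite it at each use. A secondary technical care point is the passage from $\tau\in D(\Omega)$ to the bare product structure and making sure the oscillation/mean-value lemma is applied only to products of genuinely admissible functions; none of this is deep, but it is where sign of rigor must be maintained.
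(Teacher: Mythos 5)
The paper offers no proof of this result beyond the citation to Theorem 11 of Lukkassen--Nguetseng--Wall, so your attempt has to be judged against that source. Your Stage 2 is essentially the standard (and correct) argument: once the $Y$-periodic extension of $u_{0}$ lies in $L^{2}(\Omega ;C_{\sharp }(Y))$, the trace $w_{\varepsilon }(x)=u_{0}(x,x/\varepsilon )$ is well defined and admissible, and expanding $\Vert u_{\varepsilon }-w_{\varepsilon }\Vert _{L^{2}}^{2}$ together with the norm hypothesis gives $0$ in the limit. Stage 1, however, has a genuine gap. In the cited theorem $\left\{ v_{\varepsilon }\right\} $ is an \emph{arbitrary} bounded sequence in $L^{2}(\Omega )$ that two-scale converges (weakly) to some $v\in L^{2}(\Omega \times Y)$; the whole point of the norm hypothesis on $\left\{ u_{\varepsilon }\right\} $ is to let such a non-admissible $v_{\varepsilon }$ be paired with $u_{\varepsilon }$. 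You instead take $v_{\varepsilon }(x)=v(x,x/\varepsilon )$ for $v\in L^{2}(\Omega ;C_{\sharp }(Y))$, i.e.\ an admissible test sequence. Under that reading the conclusion follows from the definition of two-scale convergence (plus the routine density extension of the test class) \emph{without} the norm hypothesis, which in your argument is never actually needed --- a clear sign the statement has been trivialized rather than proved.

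A second, more technical defect of Stage 1 is that the polarization identity is written in terms of $u_{0}(x,x/\varepsilon )$ for $u_{0}$ merely in $L^{2}(\Omega \times Y)$; this trace is not well defined (and $\Vert u_{0}(\cdot ,\cdot /\varepsilon )\Vert _{L^{2}(\Omega )}\rightarrow \Vert u_{0}\Vert _{L^{2}(\Omega \times Y)}$ has no meaning) for a general $L^{2}$ function of two variables. The correct route avoids both problems: pick a smooth separated $\psi $ close to $u_{0}$ in $L^{2}(\Omega \times Y)$, split
\begin{equation*}
\int_{\Omega }u_{\varepsilon }v_{\varepsilon }\tau \,dx=\int_{\Omega }\left( u_{\varepsilon }(x)-\psi \left( x,\tfrac{x}{\varepsilon }\right) \right) v_{\varepsilon }(x)\tau (x)\,dx+\int_{\Omega }\psi \left( x,\tfrac{x}{\varepsilon }\right) v_{\varepsilon }(x)\tau (x)\,dx\text{,}
\end{equation*}
pass to the limit in the second term using two-scale convergence of $v_{\varepsilon }$ against the admissible test function $\psi \tau $, and control the first by Cauchy--Schwarz together with the identity $\limsup_{\varepsilon }\Vert u_{\varepsilon }-\psi (\cdot ,\cdot /\varepsilon )\Vert _{L^{2}(\Omega )}^{2}=\Vert u_{0}-\psi \Vert _{L^{2}(\Omega \times Y)}^{2}$, which is exactly where the norm-convergence hypothesis enters. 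Letting $\psi \rightarrow u_{0}$ then closes the argument. I recommend you restructure Stage 1 along these lines; Stage 2 can stand as written once the admissibility of $u_{0}$ and of $\left\vert u_{0}\right\vert ^{2}$ in the continuous-in-$y$ class is explicitly invoked.
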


\begin{proof}
See Theorem 11 in \cite{LNW}.
\end{proof}

The two-scale convergence can be used in many applications due to the
compactness property.

\begin{remark}
In \cite{Al1} Allaire demonstrates proof of Theorem \ref{compactness T} for
the test functions from $L^{2}(\Omega ;C_{\sharp }(Y))$.
\end{remark}

We also define the concept of very weak two-scale convergence using a
smaller class of test functions than usual two-scale convergence.

\begin{definition}
Let $\left\{ \varphi _{\varepsilon }\right\} $ be a sequence of functions in 
$L^{1}(\Omega )$. We say that $\left\{ \varphi _{\varepsilon }\right\} $
two-scale convergences very weakly to $\varphi _{1}\in L^{1}(\Omega \times
Y) $ if%
\begin{equation*}
\underset{\varepsilon \rightarrow 0}{\lim }\int_{\Omega }\varphi
_{\varepsilon }(x)v_{1}(x)v_{2}\left( \frac{x}{\varepsilon }\right)
dx=\int_{\Omega }\int_{Y}\varphi _{1}(x,y)v_{1}(x)v_{2}\left( y\right) dydx
\end{equation*}%
for all $v_{1}\in D(\Omega )$ and all $v_{2}\in C_{\sharp }^{\infty }(Y)/%
%TCIMACRO{\U{211d} }%
%BeginExpansion
\mathbb{R}
%EndExpansion
$ and 
\begin{equation*}
\int_{Y}\varphi _{1}(x,y)dy=0\text{.}
\end{equation*}%
We write
\end{definition}

\begin{equation*}
\varphi _{\varepsilon }(x)\overset{2}{\underset{vw}{\rightharpoonup }}%
\varphi _{1}(x,y)\text{.}
\end{equation*}%
For $\left\{ u_{\varepsilon }\right\} $ bounded in $H^{1}(\Omega )$ there is
a characterization of the two-scale limit for $\left\{ \nabla u_{\varepsilon
}\right\} $ which means that, up to a subsequence,%
\begin{equation*}
\nabla u_{\varepsilon }(x)\overset{2}{\rightharpoonup }\nabla u(x)+\nabla
_{y}u_{1}(x,y)\text{,}
\end{equation*}%
where $u$ is the weak $H^{1}(\Omega )$-limit for $\left\{ u_{\varepsilon
}\right\} $ and $u_{1}\in L^{2}(\Omega ,H_{\sharp }^{1}(Y)/%
%TCIMACRO{\U{211d} }%
%BeginExpansion
\mathbb{R}
%EndExpansion
)$.

In fact, there is a connection between the very weak two-scale limit for $%
\left\{ \varepsilon ^{-1}u_{\varepsilon }\right\} $ and the two-scale limit
for $\left\{ \nabla u_{\varepsilon }\right\} $. It is possible to establish
a compactness result for a sequence $\left\{ \varepsilon ^{-1}u_{\varepsilon
}\right\} ,$ though it is not bounded in any Lebesgue space. We have that%
\begin{equation*}
\frac{u_{\varepsilon }}{\varepsilon }\overset{2}{\underset{vw}{%
\rightharpoonup }}u_{1}\text{,}
\end{equation*}%
up to a subsequence, if $\left\{ u_{\varepsilon }\right\} $ is bounded in $%
H_{0}^{1}(\Omega )$.

Just as with regular two-scale convergence we can generalize this result to
be valid for several scales and to the evolution setting. See e.g. \cite%
{FHOLP}.

For regular multiscale convergence, we have certain assumptions about how
the scales relate to each other. Assuming that the scales in the lists $%
\left\{ \varepsilon _{1},\ldots ,\varepsilon _{n}\right\} $ and $\left\{
\varepsilon _{1}^{\prime },\ldots ,\varepsilon _{m}^{\prime }\right\} $ are
microscopic, i.e. $\varepsilon _{k},$ $\varepsilon _{k}^{\prime }$ goes to
zero when $\varepsilon $ does. We say, according to the definition in \cite%
{AlBr}, that the scales in one list are separated if%
\begin{equation*}
\lim_{\varepsilon \rightarrow 0}\frac{\varepsilon _{k+1}}{\varepsilon _{k}}=0
\end{equation*}%
and well-separated if there exists a positive integer $\ell $ such that%
\begin{equation*}
\lim_{\varepsilon \rightarrow 0}\frac{1}{\varepsilon _{k}}\left( \frac{%
\varepsilon _{k+1}}{\varepsilon _{k}}\right) ^{\ell }=0
\end{equation*}%
where $k=1,\ldots ,n-1$. For the evolution setting we need the equivalent
for multiscale convergence with time-dependent effect. Following \cite{Per1}
we provide the concept in the next definition.

\begin{definition}
Let $\left\{ \varepsilon _{1},\ldots ,\varepsilon _{n}\right\} $ and $%
\left\{ \varepsilon _{1}^{\prime },\ldots ,\varepsilon _{m}^{\prime
}\right\} $ be lists of well-separated scales (see \cite{AlBr}). Collect all
elements from both lists in one common list. If from possible duplicates,
where by duplicates we mean scales which tend to zero equally fast, one
member of each such pair is removed and the list in order of magnitude of
all the remaining elements is well-separated, the lists $\left\{ \varepsilon
_{1},\ldots ,\varepsilon _{n}\right\} $ and $\left\{ \varepsilon
_{1}^{\prime },\ldots ,\varepsilon _{m}^{\prime }\right\} $ are said to be
jointly well-separated. Moreover, $y^{n}=(y_{1},y_{2},\ldots ,y_{n})$ and $%
s^{m}=(s_{1},s_{2},\ldots ,s_{m})$.
\end{definition}

\begin{remark}
For some more examples and a technically strict definition, see Section 2.4
in \cite{Per1}.
\end{remark}

Below we provide a characterization of multiscale limits for gradients.

\begin{definition}
\label{Definition11}A sequence $\left\{ u_{\varepsilon }\right\} $ in $%
L^{2}(\Omega \times (0,T))$ is said to $\left( n+1,m+1\right) $- scale
converge to $u_{0}\in L^{2}(\Omega \times (0,T)\times \mathcal{Y}_{n,m})$ if%
\begin{gather*}
\lim_{\varepsilon \rightarrow 0}\int_{0}^{T}\int_{\Omega }u_{\varepsilon
}(x,t)v\left( x,t,\frac{x}{\varepsilon _{1}},\cdots ,\frac{x}{\varepsilon
_{n}},\frac{t}{\varepsilon _{1}^{\prime }},\cdots ,\frac{t}{\varepsilon
_{m}^{\prime }}\right) dxdt \\
=\int_{0}^{T}\int_{\Omega }\int_{\mathcal{Y}%
_{n,m}}u_{0}(x,t,y^{n},s^{m})v(x,t,y^{n},s^{m})dy^{n}ds^{m}dxdt
\end{gather*}%
for all $v\in L^{2}(\Omega \times (0,T);C_{\sharp }(\mathcal{Y}_{n,m})).$
This is denoted by 
\begin{equation*}
u_{\varepsilon }(x,t)\overset{n+1,m+1}{\rightharpoonup }%
u_{0}(x,t,y^{n},s^{m})\text{.}
\end{equation*}
\end{definition}

We give the compactness result for $\left( n+1,m+1\right) $-scale
convergence.

\begin{theorem}
\label{compactness result}Let $\left\{ u^{\varepsilon }\right\} $ be a
bounded sequence in $L^{2}(\Omega _{T})$ and assume that the lists $\left\{
\varepsilon _{1},\ldots ,\varepsilon _{n}\right\} $ and $\left\{ \varepsilon
_{1}^{\prime },\ldots ,\varepsilon _{m}^{\prime }\right\} $ are jointly
separated. Then there exists a function $u_{0}\in L^{2}(\Omega _{T}\times 
\mathcal{Y}_{n,m})$ such that%
\begin{equation*}
u^{\varepsilon }(x,t)\overset{n+1,m+1}{\rightharpoonup }%
u_{0}(x,t,y^{n},s^{m})\text{,}
\end{equation*}%
up to a subsequence.
\end{theorem}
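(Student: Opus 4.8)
The plan is to reduce the $(n+1,m+1)$-scale convergence statement to the classical two-scale compactness theorem (Theorem \ref{compactness T}) by viewing the product of all spatial and temporal microscale variables as a single fast variable on a single periodicity cell. Concretely, I would first introduce the combined spatial-temporal macrovariable $z=(x,t)\in\Omega\times(0,T)=:\Omega_T$ and the combined microcell $\mathcal{Y}_{n,m}=Y^n\times S^m$, which is itself a unit cube in $\mathbb{R}^{nN+m}$ with periodic structure. The oscillating test function $v\bigl(x,t,\tfrac{x}{\varepsilon_1},\dots,\tfrac{x}{\varepsilon_n},\tfrac{t}{\varepsilon_1'},\dots,\tfrac{t}{\varepsilon_m'}\bigr)$ is then of the form $v(z,\rho_\varepsilon(z))$ where $\rho_\varepsilon\colon\Omega_T\to\mathcal{Y}_{n,m}$ collects all the rescalings. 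So morally we are in the one-fast-scale situation, and the only genuine content is that the joint separation hypothesis guarantees the images $\rho_\varepsilon(z)$ equidistribute in $\mathcal{Y}_{n,m}$ well enough for the mean-value property to hold.

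The key steps, in order, would be: (1) By Theorem \ref{compactness T} applied on $\Omega_T$ (a bounded subset of $\mathbb{R}^{N+1}$), extract a subsequence and a classical two-scale limit — but here one must be careful, because the classical theorem only provides a limit for the single-scale oscillation $z\mapsto z/\varepsilon$, not for the multiple independent rescalings. So instead I would argue directly: fix $v\in L^2(\Omega_T;C_\sharp(\mathcal{Y}_{n,m}))$, and show that the linear functional $v\mapsto \lim_{\varepsilon\to 0}\int_{\Omega_T}u^\varepsilon(z)\,v(z,\rho_\varepsilon(z))\,dz$ is (along a suitable subsequence) well-defined and bounded by $\|v\|_{L^2(\Omega_T\times\mathcal{Y}_{n,m})}$ times the uniform bound on $\|u^\varepsilon\|_{L^2(\Omega_T)}$. (2) The boundedness estimate follows from Cauchy–Schwarz together with the mean-value property $\int_{\Omega_T}\bigl|v(z,\rho_\varepsilon(z))\bigr|^2\,dz\to\int_{\Omega_T}\int_{\mathcal{Y}_{n,m}}|v(z,y^n,s^m)|^2\,dy^n\,ds^m\,dz$, which is exactly where the jointly separated hypothesis is used (it ensures the multiple oscillations do not lock into resonance and the joint average is the product average). (3) Having the bounded functional on the dense subspace $L^2(\Omega_T;C_\sharp(\mathcal{Y}_{n,m}))$ of $L^2(\Omega_T\times\mathcal{Y}_{n,m})$, the Riesz representation theorem produces $u_0\in L^2(\Omega_T\times\mathcal{Y}_{n,m})$ with the desired identity. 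A diagonal argument over a countable dense family of test functions lets one pass to a single subsequence valid for all $v$.

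The main obstacle is step (2): establishing the generalized mean-value property $\int_{\Omega_T}|v(z,\rho_\varepsilon(z))|^2\,dz\to\iint|v|^2$ for test functions oscillating at several jointly separated scales in both space and time. For a single scale this is the standard Riemann–Lebesgue / equidistribution lemma; for several scales one typically proves it first for finite sums of products $\phi(z)\,\psi_1(x/\varepsilon_1)\cdots\psi_n(x/\varepsilon_n)\,\chi_1(t/\varepsilon_1')\cdots\chi_m(t/\varepsilon_m')$ with trigonometric-polynomial factors, using the separation to iterate the one-scale lemma one scale at a time (freezing the slower scales while averaging the fastest), and then extends by density and uniform continuity of $v(z,\cdot)$. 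The subtlety specific to this setting is that spatial and temporal scales are interleaved in the common ordered list, so the iteration must respect that joint ordering — this is precisely the role of ``jointly separated'' as opposed to each list being separated on its own. Once this lemma is in hand the rest is routine functional analysis, so I would either cite it from \cite{AlBr}/\cite{Per1} or state and prove it as a preliminary lemma. Since the excerpt explicitly attributes the multiscale machinery to those references, I expect the paper's proof to simply invoke the analogous statement there and note that the perforation plays no role at this stage (the domain $\Omega_T$ is unperforated here).
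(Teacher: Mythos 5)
Your proposal is correct and follows essentially the same route as the proofs the paper relies on: the paper itself only cites Theorem 17 of \cite{FHOLP} (and Theorem 2.66 of \cite{Per1}), and those proofs proceed exactly as you outline — uniform boundedness of the oscillating functionals via Cauchy--Schwarz, the mean-value (equidistribution) property for jointly separated scales applied to $|v|^2$, a diagonal extraction over a separable dense class of test functions, and the Riesz representation theorem to produce $u_0$. You also correctly identify that the separation hypothesis enters only through the mean-value lemma and that the perforation is irrelevant at this stage.
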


\begin{proof}
See Theorem 17 in \cite{FHOLP} and also Theorem 2.66 in \cite{Per1}%
.\smallskip
\end{proof}

\begin{theorem}
\label{gradients} Let $\left\{ u_{\varepsilon }\right\} $ be a bounded
sequence in $W_{2}^{1}(0,T;H_{0}^{1}(\Omega ),L^{2}\left( \Omega \right) )$
and suppose that the lists $\left\{ \varepsilon _{1},\ldots ,\varepsilon
_{n}\right\} $ and $\left\{ \varepsilon _{1}^{\prime },\ldots ,\varepsilon
_{m}^{\prime }\right\} $ are jointly well-separated. Then there exists a
subsequence such that%
\begin{eqnarray*}
u_{\varepsilon }(x,t) &\rightarrow &u(x,t)\text{ in }L^{2}(\Omega \times
(0,T))\text{,} \\
u_{\varepsilon }(x,t) &\rightharpoonup &u(x,t)\text{ in }%
L^{2}(0,T;H_{0}^{1}(\Omega ))
\end{eqnarray*}%
and%
\begin{equation*}
\nabla u_{\varepsilon }(x,t)\overset{n+1,m+1}{\rightharpoonup }\nabla
u(x,t)+\sum_{k=1}^{n}\nabla _{y_{k}}u_{k}(x,t,y^{k},s^{m})\text{,}
\end{equation*}%
where $u\in W_{2}^{1}(0,T;H_{0}^{1}(\Omega ),L^{2}(\Omega ))$, $u_{1}\in
L^{2}(\Omega \times (0,T)\times S^{m};H_{\sharp }^{1}(Y_{1})/%
%TCIMACRO{\U{211d} }%
%BeginExpansion
\mathbb{R}
%EndExpansion
)$ and $u_{k}\in L^{2}(\Omega \times (0,T)\times \mathcal{Y}%
_{k-1,m};H_{\sharp }^{1}(Y_{k})/%
%TCIMACRO{\U{211d} }%
%BeginExpansion
\mathbb{R}
%EndExpansion
)$ for $k=2,\ldots ,n$.
\end{theorem}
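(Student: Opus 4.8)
The plan is to follow the classical Allaire–Nguetseng strategy for the gradient characterization, adapted to the parabolic multiscale setting, relying on the compactness results already quoted. First I would extract the convergences: since $\{u_\varepsilon\}$ is bounded in $W_2^1(0,T;H_0^1(\Omega),L^2(\Omega))$, it is bounded in $L^2(0,T;H_0^1(\Omega))$ and, by the Aubin–Lions lemma, the embedding into $L^2(\Omega\times(0,T))$ is compact, giving (up to a subsequence) the strong $L^2$ convergence $u_\varepsilon\to u$ and the weak convergence $u_\varepsilon\rightharpoonup u$ in $L^2(0,T;H_0^1(\Omega))$ with $u\in W_2^1(0,T;H_0^1(\Omega),L^2(\Omega))$. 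Since $\{\nabla u_\varepsilon\}$ is bounded in $L^2(\Omega_T)^N$, Theorem \ref{compactness result} yields a further subsequence and a limit $\xi\in L^2(\Omega_T\times\mathcal{Y}_{n,m})^N$ with $\nabla u_\varepsilon\overset{n+1,m+1}{\rightharpoonup}\xi$; the whole task is then to identify $\xi$ with the stated expression.

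Next I would identify the "macroscopic" part of $\xi$. Testing the $(n+1,m+1)$-scale convergence against oscillating test functions $\Psi(x,t,x/\varepsilon_1,\ldots,t/\varepsilon_m')$ and integrating by parts, the boundedness of $\nabla u_\varepsilon$ forces certain components to vanish: specifically, choosing test vector fields that are divergence-free in each microvariable $y_k$ separately shows $\xi$ has no dependence preventing it from being written as $\nabla u(x,t)+\sum_{k=1}^n g_k$ with $g_k$ a gradient in $y_k$. More precisely, the standard argument runs in two stages. Stage one: for $v\in D(\Omega\times(0,T))$ one checks $\int \nabla u_\varepsilon\, v\, dx\,dt\to\int\nabla u\, v$, so the mean of $\xi$ over $\mathcal{Y}_{n,m}$ equals $\nabla u$. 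Stage two: for each $k$, testing with $\varepsilon_k$-scaled curl-type fields of the form $\varepsilon_k\,\Phi(x,t)\,\psi(x/\varepsilon_1,\ldots)$ and passing to the limit shows that $\xi$, regarded as a function of $y_k$ with the other variables frozen, is curl-free and periodic, hence a gradient $\nabla_{y_k}u_k$ for some $u_k$ in the appropriate Bochner space $L^2$ of $H^1_\sharp(Y_k)/\mathbb{R}$; the joint well-separatedness of the scales is exactly what guarantees these scale-by-scale test functions are admissible and that the cross terms between different scales do not interfere. Iterating over $k=1,\ldots,n$ and peeling off one gradient at a time gives the decomposition $\xi=\nabla u+\sum_{k=1}^n\nabla_{y_k}u_k$ with the stated regularity and dependence structure (each $u_k$ depends only on the coarser variables $y^{k-1}$ and all of $s^m$, reflecting that spatial scales finer than $\varepsilon_k$ have not yet been resolved at that stage).

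The main obstacle I expect is the bookkeeping in the iterative peeling-off step: showing that after subtracting $\nabla u$ and the already-identified gradients $\nabla_{y_1}u_1,\ldots,\nabla_{y_{k-1}}u_{k-1}$, the remainder is still curl-free in $y_k$ requires a careful choice of test functions that oscillate at scales $\varepsilon_1,\ldots,\varepsilon_k$ simultaneously and are annihilated under integration against the coarser-scale gradients — this is where jointly well-separated (rather than merely separated) scales is essential, because it licenses the use of test functions of the form $\varepsilon_k^\ell$-weighted correctors needed to kill lower-order contributions. The time variable plays a passive role here: since the equation's temporal oscillations enter only through $A$, and we are characterizing $\nabla u_\varepsilon$ purely from a priori bounds, the $s^m$ variables are simply carried along as parameters, and the parabolic structure enters only via Aubin–Lions for the strong convergence. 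I would also note that uniqueness of the $(n+1,m+1)$-scale limit (the analogue of the uniqueness theorem quoted for two-scale limits) makes the decomposition well-defined up to the stated quotient by constants in each $y_k$. A convenient reference to mirror is the corresponding argument in \cite{FHOLP}, adapting it from $\Omega$ to the present notation.
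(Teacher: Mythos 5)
Your outline is correct and is essentially the argument behind this result; note that the paper itself does not prove Theorem \ref{gradients} but simply cites Theorem 2.74 of \cite{Per1} and the Appendix of \cite{FHOLP}, and those proofs proceed exactly as you describe: Aubin--Lions for the strong $L^{2}$ convergence, the evolution multiscale compactness theorem applied to $\{\nabla u_{\varepsilon}\}$, identification of the mean of the limit with $\nabla u$, and then identification of the fluctuating part by testing against (generalized) divergence-free fields and invoking the orthogonality characterization of Lemma 4.14 in \cite{AlBr}, with the temporal microvariables $s^{m}$ carried along as parameters. The only cosmetic difference is that the cited proofs obtain the full decomposition $\sum_{k=1}^{n}\nabla_{y_{k}}u_{k}$ in one step from the description of $(H^{\ast})^{\perp}$ rather than by peeling off one scale at a time, which spares you the bookkeeping you flag as the main obstacle.
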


\begin{proof}
See Theorem 2.74 in \cite{Per1} or the Appendix of \cite{FHOLP}.
\end{proof}

We define very weak evolution multiscale convergence.

\begin{definition}
\label{Definition13}A sequence $\left\{ \varphi _{\varepsilon }\right\} $ in 
$L^{1}(\Omega \times (0,T))$ is said to $\left( n+1,m+1\right) $-scale
converge very weakly to $\varphi _{n}\in L^{1}(\Omega \times (0,T)\times 
\mathcal{Y}_{n,m})$ if%
\begin{eqnarray*}
&&\int_{0}^{T}\int_{\Omega }\varphi _{\varepsilon }(x,t)v_{1}\left( x,\frac{x%
}{\varepsilon _{1}},\cdots ,\frac{x}{\varepsilon _{n-1}}\right) v_{2}\left( 
\frac{x}{\varepsilon _{n}}\right) c\left( t,\frac{t}{\varepsilon
_{1}^{\prime }},\cdots ,\frac{t}{\varepsilon _{m}^{\prime }}\right) dxdt \\
&\rightarrow &\int_{0}^{T}\int_{\Omega }\int_{\mathcal{Y}_{n,m}}\varphi
_{n}(x,t,y^{n},s^{m})v_{1}(x,y^{n-1})v_{2}(y_{n})c(t,s^{m})dy^{n}ds^{m}dxdt
\end{eqnarray*}%
for all $v_{1}\in D(\Omega ;C_{\sharp }^{\infty }(Y^{n-1}))$, $v_{2}\in
C_{\sharp }^{\infty }(Y_{n})/%
%TCIMACRO{\U{211d} }%
%BeginExpansion
\mathbb{R}
%EndExpansion
$ and $c\in D(0,T;C_{\sharp }^{\infty }(S^{m})).$\newline
A unique limit is provided by requiring that%
\begin{equation*}
\int_{Y_{n}}\varphi _{n}(x,t,y^{n},s^{m})dy_{n}=0\text{.}
\end{equation*}%
We write%
\begin{equation*}
\varphi _{\varepsilon }(x,t)\underset{vw}{\overset{n+1,m+1}{\rightharpoonup }%
}\varphi _{n}(x,t,y^{n},s^{m})\text{.}
\end{equation*}
\end{definition}

\begin{remark}
Note that the decoupling of the function $v_{2}$ governed by fastest spatial
variable from $v_{1}$ depending on the remaining local spatial variables and
the global variable $x$ is important when proving the compactness result in
Theorem \ref{theorem 17} below because $v_{2}$ has to be found by means of a
certain kind of Poisson equation.
\end{remark}

We are now ready to state compactness result for very weak $(n+1,$ $m+1)$%
-scale convergence.

\begin{theorem}
\label{theorem 17}Let $\left\{ u_{\varepsilon }\right\} $ be a bounded
sequence in $W_{{}}^{1,2}(0,T;H_{0}^{1}\left( \Omega \right) ,L^{2}\left(
\Omega \right) )$ and assume that the lists $\left\{ \varepsilon _{1},\ldots
,\varepsilon _{n}\right\} $ and $\left\{ \varepsilon _{1}^{\prime },\ldots
,\varepsilon _{m}^{\prime }\right\} $ are jointly well-separated. Then there
exists a subsequence such that%
\begin{equation*}
\frac{u_{\varepsilon }(x,t)}{\varepsilon _{n}}\overset{n+1,m+1}{\underset{vw}%
{\rightharpoonup }}u_{n}(x,t,y^{n},s^{m})\text{,}
\end{equation*}%
where, for $n=1,$ $u_{1}\in L^{2}(\Omega \times (0,T)\times S^{m};H_{\sharp
}^{1}(Y_{1})/%
%TCIMACRO{\U{211d} }%
%BeginExpansion
\mathbb{R}
%EndExpansion
)$ and, for $n=2,3,\ldots ,$ \newline
$u_{n}\in L^{2}(\Omega \times (0,T)\times \mathcal{Y}_{n-1,m};H_{\sharp
}^{1}(Y_{n})/%
%TCIMACRO{\U{211d} }%
%BeginExpansion
\mathbb{R}
%EndExpansion
).$
\end{theorem}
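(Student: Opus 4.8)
The plan is to establish the result by reducing the very weak multiscale convergence of $\{u_\varepsilon/\varepsilon_n\}$ to the already-known characterization of the $(n+1,m+1)$-scale limit of $\{\nabla u_\varepsilon\}$ from Theorem \ref{gradients}. First I would fix test functions $v_1\in D(\Omega;C_\sharp^\infty(Y^{n-1}))$, $v_2\in C_\sharp^\infty(Y_n)/\mathbb{R}$ and $c\in D(0,T;C_\sharp^\infty(S^m))$, and observe that since $v_2$ has mean value zero over $Y_n$, the cell problem
\begin{equation*}
\Delta_{y_n}w(y_n)=v_2(y_n),\qquad w\in H_\sharp^1(Y_n)/\mathbb{R},
\end{equation*}
has a unique solution $w\in C_\sharp^\infty(Y_n)/\mathbb{R}$. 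The function $\varepsilon_n\,v_1(x,x/\varepsilon_1,\dots,x/\varepsilon_{n-1})\,w(x/\varepsilon_n)\,c(t,t/\varepsilon_1',\dots,t/\varepsilon_m')$ is then a legitimate (small) test function whose gradient, after applying the product rule, produces a term $v_1\,\nabla_{y_n}w(x/\varepsilon_n)\,c$ of order one together with terms of order $\varepsilon_n/\varepsilon_{n-1}$, $\varepsilon_n$, etc., which vanish in the limit precisely because the scales are jointly well-separated. This is the standard device (cf. the very weak two-scale results recalled before Definition \ref{Definition13} and \cite{FHOLP}) for converting a gradient characterization into a very weak statement.

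Next I would integrate by parts in space: testing the equation $\nabla u_\varepsilon \overset{n+1,m+1}{\rightharpoonup}\nabla u+\sum_{k=1}^n\nabla_{y_k}u_k$ against the vector test function $v_1\,\nabla_{y_n}w(\cdot/\varepsilon_n)\,c$ and moving the derivative onto $u_\varepsilon$, the left-hand side becomes, up to lower-order terms,
\begin{equation*}
-\frac{1}{\varepsilon_n}\int_0^T\!\!\int_\Omega u_\varepsilon\, v_1\Big(x,\tfrac{x}{\varepsilon_1},\dots\Big)\,\Delta_{y_n}w\Big(\tfrac{x}{\varepsilon_n}\Big)\,c\Big(t,\tfrac{t}{\varepsilon_1'},\dots\Big)\,dx\,dt
=-\int_0^T\!\!\int_\Omega \frac{u_\varepsilon}{\varepsilon_n}\, v_1\, v_2\Big(\tfrac{x}{\varepsilon_n}\Big)\,c\,dx\,dt,
\end{equation*}
using $\Delta_{y_n}w=v_2$. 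On the right-hand side, the multiscale limit against $v_1\,\nabla_{y_n}w\,c$ is computed term by term: the contributions of $\nabla u$ and of $\nabla_{y_k}u_k$ for $k<n$ integrate to zero against $\nabla_{y_n}w$ over $Y_n$ (since $\int_{Y_n}\nabla_{y_n}w\,dy_n=0$ by periodicity), leaving only
\begin{equation*}
\int_0^T\!\!\int_\Omega\int_{\mathcal{Y}_{n,m}}\nabla_{y_n}u_n\cdot\nabla_{y_n}w\, v_1\,c\,dy^n\,ds^m\,dx\,dt
=-\int_0^T\!\!\int_\Omega\int_{\mathcal{Y}_{n,m}}u_n\,v_2\, v_1\,c\,dy^n\,ds^m\,dx\,dt,
\end{equation*}
again integrating by parts on $Y_n$ and using $\Delta_{y_n}w=v_2$. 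Equating the two sides gives exactly the defining relation for $\{u_\varepsilon/\varepsilon_n\}\overset{n+1,m+1}{\underset{vw}{\rightharpoonup}}u_n$, with $u_n$ the same function supplied by Theorem \ref{gradients}; its regularity $u_1\in L^2(\Omega\times(0,T)\times S^m;H_\sharp^1(Y_1)/\mathbb{R})$ and $u_n\in L^2(\Omega\times(0,T)\times\mathcal{Y}_{n-1,m};H_\sharp^1(Y_n)/\mathbb{R})$ for $n\ge2$ is inherited from that theorem, and the normalization $\int_{Y_n}u_n\,dy_n=0$ holds because $u_n$ lives in the quotient by constants.

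The main obstacle, and the step requiring genuine care rather than bookkeeping, is the justification that all the error terms generated by the product rule — those carrying factors $\varepsilon_n/\varepsilon_k$ for $k<n$ and the purely $\varepsilon_n$-order terms coming from differentiating $v_1$ in its slow variable $x$ — actually tend to zero. This is where the jointly well-separated hypothesis on the scale lists is used in an essential way: the well-separatedness furnishes, for each adjacent pair, the integer $\ell$ with $\varepsilon_k^{-1}(\varepsilon_{k+1}/\varepsilon_k)^\ell\to0$, which, combined with the $W^{1,2}(0,T;H_0^1(\Omega),L^2(\Omega))$ bound on $\{u_\varepsilon\}$ (hence the uniform $L^2$-bound on $\{\nabla u_\varepsilon\}$), controls the apparently singular $\varepsilon_n^{-1}$ prefactor. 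I would handle these terms by a Cauchy–Schwarz estimate against the bounded sequence $\{\nabla u_\varepsilon\}$ combined with the scale separation, mirroring the argument in the Appendix of \cite{FHOLP} and Theorem 2.74 in \cite{Per1}; once these vanish, passing to the limit in the remaining $\mathcal{O}(1)$ terms is routine multiscale convergence applied to the smooth test function $v_1\,\nabla_{y_n}w\,c$.
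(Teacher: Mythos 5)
The paper does not prove this theorem at all; it simply cites Theorem 8 of \cite{FHOLP} (and the remark preceding the statement already hints at the key device, namely that $v_2$ ``has to be found by means of a certain kind of Poisson equation''). Your proposal reconstructs essentially that proof: solve $\Delta_{y_n}w=v_2$ on the torus (solvable precisely because $v_2$ has zero mean), test the gradient characterization of Theorem \ref{gradients} against $v_1\,\nabla_{y_n}w(\cdot/\varepsilon_n)\,c$, integrate by parts to make $\varepsilon_n^{-1}u_\varepsilon v_1v_2(\cdot/\varepsilon_n)c$ appear, and use $\int_{Y_n}\nabla_{y_n}w\,dy_n=0$ to kill the contributions of $\nabla u$ and $\nabla_{y_k}u_k$, $k<n$, so that only $u_n$ survives. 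This is the right route, the regularity of $u_n$ is indeed inherited from Theorem \ref{gradients}, and for $n=1$ (the only case the paper actually uses in Sections 3--4) your sketch is complete up to routine details, since no cross terms in the slow oscillating variables arise there.

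The one place where your argument is thinner than it needs to be is the disposal of the error terms for $n\geq 2$. After moving the divergence onto $u_\varepsilon$, the product rule produces terms of the form $\varepsilon_k^{-1}\int u_\varepsilon\,\nabla_{y_k}v_1\cdot\nabla_{y_n}w(x/\varepsilon_n)\,c$ for $k<n$, whose prefactor $\varepsilon_k^{-1}$ \emph{blows up}; a single Cauchy--Schwarz estimate against the bounded sequence $\{\nabla u_\varepsilon\}$ does not make them vanish. The actual mechanism (as in the Appendix of \cite{FHOLP} and Theorem 2.74 of \cite{Per1}) is an iterated integration by parts: one writes $\nabla_{y_n}w(x/\varepsilon_n)=\varepsilon_n\nabla_x[w(x/\varepsilon_n)]$, integrates by parts again to gain a factor $\varepsilon_n/\varepsilon_k$, and repeats, using at each stage that the new cell function still has zero mean over $Y_n$; after $\ell+1$ iterations the well-separatedness condition $\varepsilon_k^{-1}(\varepsilon_{k+1}/\varepsilon_k)^{\ell}\to 0$ finally beats the accumulated negative powers. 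You correctly identify well-separatedness with the integer $\ell$ as the essential hypothesis and cite the right sources, so this is a compression rather than a conceptual error, but as written the justification of the vanishing of these terms is incomplete.
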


\begin{proof}
See Theorem 8 in \cite{FHOLP}.
\end{proof}

In Sections 3-4 we consider a special case of very weak multiscale
convergence, where the fast spatial scale is $\varepsilon _{1}=\varepsilon $
and the rapid temporal scale is chosen as $\varepsilon _{1}^{\prime
}=\varepsilon ^{r}$, $r>0$, $n=$ $m=1$ and make the necessary modifications
to suit homogenization in perforated domains.

\section{An adaptation to perforated domains}

By knowing that two-scale convergence can handle homogenization problems in
perforated domains, let us define periodically perforated domains $\Omega
_{\varepsilon }$ in a setting suitable for our problem.

We define $\Omega $ as an open bounded domain in $%
%TCIMACRO{\U{211d} }%
%BeginExpansion
\mathbb{R}
%EndExpansion
^{N}$, $N\geq 2$, with smooth boundary $\partial \Omega $. We choose $%
Y_{i}^{\varepsilon }$, to be disjoint open cubes with side-length $%
\varepsilon $ such that%
\begin{equation*}
\Omega \subset \bigcup_{i=1}^{N(\varepsilon )}Y_{i}^{\varepsilon }\text{.}
\end{equation*}%
We need to define 
\begin{equation*}
\mathcal{A}=\left\{ i\in 
%TCIMACRO{\U{2115} }%
%BeginExpansion
\mathbb{N}
%EndExpansion
\left\vert Y_{i}^{\varepsilon }\cap \partial \Omega \neq \varnothing \right.
\right\} \text{.}
\end{equation*}%
We let $Y^{H}\subset \subset Y$, where $Y^{H}$ has smooth boundary; $Y^{\ast
}=Y-Y^{H}$ and $Y_{i}^{\varepsilon \ast }$ are miniatures with side-length $%
\varepsilon $ of $Y^{\ast }$ such that $Y_{i}^{\varepsilon \ast }\subset
Y_{i}^{\varepsilon }$. We define%
\begin{equation*}
\hat{\Omega}_{\varepsilon }=\left( \bigcup_{i=1}^{N(\varepsilon
)}Y_{i}^{\varepsilon \ast }\right) \cap \Omega \text{.}
\end{equation*}%
Furthemore, $S_{\varepsilon }$ is defined as 
\begin{equation*}
S_{\varepsilon }=\left( \bigcup_{i\in \mathcal{A}}Y_{i}^{\varepsilon \ast
}\right) \cap \Omega \text{.}
\end{equation*}%
Analogously, we define 
\begin{equation*}
R_{\varepsilon }=\left( \bigcup_{i\in \mathcal{A}}Y_{i}^{\varepsilon
}\right) \cap \Omega \text{.}
\end{equation*}

We let

\begin{equation*}
\Omega _{\varepsilon }=\left( \hat{\Omega}_{\varepsilon }-S_{\varepsilon
}\right) \cup R_{\varepsilon }\text{.}
\end{equation*}%
This means that we can define $\Omega _{\varepsilon }$ as 
\begin{equation*}
\Omega _{\varepsilon }=\left\{ \left\{ x\in \Omega \left\vert \chi _{Y^{\ast
}}(\frac{x}{\varepsilon })=1\right. \right\} -S_{\varepsilon }\right\} \cup
R_{\varepsilon }\text{,}
\end{equation*}%
where $\chi _{Y^{\ast }}$ is the $Y$-periodic repetition of a function
defined on $Y$ that is equal to one on $Y^{\ast }$and zero elsewhere. Hence,
the perforations do not cut $\partial \Omega .$

\begin{proposition}
\label{P2,2}Let $\left\{ u_{\varepsilon }\right\} $ be bounded in $%
L^{2}(0,T;H^{1}(\Omega _{\varepsilon }))$ and assume that%
\begin{equation}
\widetilde{u_{\varepsilon }}(x,t)\overset{2,2}{\rightharpoonup }u(x,t)\chi
_{Y^{\ast }}(y)\text{,}  \label{assumption}
\end{equation}%
where $u\in L^{2}(\Omega \times (0,T))$ and $\widetilde{u_{\varepsilon }}$
is an extension by zero of $u_{\varepsilon }$ from $\Omega _{\varepsilon }$
to $\Omega .$ Then $u\in L^{2}(0,T;H^{1}(\Omega )).$
\end{proposition}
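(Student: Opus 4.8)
The plan is to show that $u$ has a spatial weak gradient in $L^2(\Omega\times(0,T))$ by producing, for each test function, the corresponding identity for the limit of $\int\int \widetilde{u_\varepsilon}\,\partial_{x_j}\varphi$. Since $\{u_\varepsilon\}$ is bounded in $L^2(0,T;H^1(\Omega_\varepsilon))$, the extension by zero $\{\widetilde{u_\varepsilon}\}$ is bounded in $L^2(\Omega\times(0,T))$, and likewise $\{\widetilde{\nabla u_\varepsilon}\}$ is bounded in $L^2(\Omega\times(0,T))^N$; by Theorem \ref{compactness result} (the $(2,2)$-scale compactness result) we may pass to a subsequence so that $\widetilde{\nabla u_\varepsilon}\overset{2,2}{\rightharpoonup}\xi_0(x,t,y,s)$ for some $\xi_0\in L^2(\Omega\times(0,T)\times\mathcal{Y}_{1,1})$, and $\xi_0$ is supported in $Y^\ast$ in the $y$-variable because $\widetilde{\nabla u_\varepsilon}$ vanishes off $\hat\Omega_\varepsilon$ (up to the boundary layer, which I address below). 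The weak $L^2(\Omega\times(0,T))$-limit of $\widetilde{\nabla u_\varepsilon}$ is then $\int_{Y^\ast\times S}\xi_0\,dy\,ds$, and the goal is to identify this with $\nabla u$.

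First I would write, for $\varphi\in D(\Omega\times(0,T))$ and $\psi\in C^\infty_\sharp(Y)$ with $\mathrm{supp}\,\psi\subset Y^\ast$ (so that $\psi(x/\varepsilon)$ is admissible on $\Omega_\varepsilon$ and its extension by zero is harmless), the integration-by-parts identity
\begin{equation*}
\int_0^T\!\!\int_\Omega \widetilde{\nabla u_\varepsilon}\cdot\Big(\varphi(x,t)\psi\big(\tfrac{x}{\varepsilon}\big)e_j\Big)\,dx\,dt
= -\int_0^T\!\!\int_\Omega \widetilde{u_\varepsilon}\,\partial_{x_j}\Big(\varphi(x,t)\psi\big(\tfrac{x}{\varepsilon}\big)\Big)\,dx\,dt,
\end{equation*}
valid because both $u_\varepsilon$ and the test function are in $H^1(\Omega_\varepsilon)$ and the boundary terms on $\partial\Omega$ vanish (by the support of $\varphi$) and on $\partial\Omega_\varepsilon\setminus\partial\Omega$ vanish by the support of $\psi$. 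Expanding the derivative on the right gives $\partial_{x_j}\varphi\,\psi(x/\varepsilon)+\varepsilon^{-1}\varphi\,(\partial_{y_j}\psi)(x/\varepsilon)$. Multiplying through by $\varepsilon$ and letting $\varepsilon\to0$ using the $(2,2)$-scale convergences of $\widetilde{\nabla u_\varepsilon}$ and $\widetilde{u_\varepsilon}$, the $O(1)$ and $O(\varepsilon)$ terms drop and one obtains
\begin{equation*}
\int_0^T\!\!\int_\Omega\int_{Y^\ast\times S} \xi_0\cdot e_j\,\varphi(x,t)\,\psi(y)\,dy\,ds\,dx\,dt \;\text{(times 0)}\; = -\int_0^T\!\!\int_\Omega\int_{Y^\ast\times S} u(x,t)\chi_{Y^\ast}(y)\,\varphi(x,t)\,(\partial_{y_j}\psi)(y)\,dy\,ds\,dx\,dt,
\end{equation*}
which forces $\int_{Y^\ast}u(x,t)\,\partial_{y_j}\psi(y)\,dy=0$ for all such $\psi$; this is automatic since $u$ does not depend on $y$ and does not yet give the gradient. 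So instead I would keep the identity unscaled and pass to the limit directly: the left side tends to $\int\int\int \xi_0\cdot e_j\,\varphi\,\psi\,dy\,ds\,dx\,dt$, while on the right the $\varepsilon^{-1}$ term must be controlled — this is the one delicate point, and it is resolved by choosing $\psi$ with $\int_{Y}\psi\,dy$ chosen appropriately or, more cleanly, by first taking $\psi\equiv 1$ on a neighbourhood of $Y^\ast$ so that $\partial_{y_j}\psi$ vanishes on $\mathrm{supp}\,\chi_{Y^\ast}$; then the singular term disappears and we get
\begin{equation*}
\int_0^T\!\!\int_\Omega\int_{Y^\ast\times S}\xi_0\cdot e_j\,\varphi\,dy\,ds\,dx\,dt = -\int_0^T\!\!\int_\Omega |Y^\ast|\,u\,\partial_{x_j}\varphi\,dx\,dt,
\end{equation*}
i.e. $\partial_{x_j}u = |Y^\ast|^{-1}\int_{Y^\ast\times S}\xi_0\cdot e_j\,dy\,ds \in L^2(\Omega\times(0,T))$, proving $u\in L^2(0,T;H^1(\Omega))$.

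The step I expect to be the main obstacle is the treatment of the boundary layer $R_\varepsilon\setminus S_\varepsilon$, i.e. the cells meeting $\partial\Omega$ that were added whole into $\Omega_\varepsilon$: there the extension $\widetilde{\nabla u_\varepsilon}$ is not supported in the periodic copy of $Y^\ast$, and strictly speaking the test function $\psi(x/\varepsilon)$ is defined there too. Because $\varphi\in D(\Omega\times(0,T))$ has compact support in $\Omega$, its support is disjoint from $R_\varepsilon$ for all sufficiently small $\varepsilon$, so the boundary-layer contribution is zero for small $\varepsilon$ and causes no trouble; I would state this reduction explicitly at the start. A secondary technical point is that admissibility of $\varphi(x,t)\psi(x/\varepsilon)$ as a $(2,2)$-scale test function requires only $\varphi\in D(\Omega\times(0,T))$ and $\psi\in C^\infty_\sharp(Y)$, which is covered by Definition \ref{Definition11}; the $s$-variable plays no role here since the test function is constant in $t/\varepsilon^2$, so the integral over $S$ just contributes a factor $1$. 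Finally, uniqueness of the gradient (independence of the subsequence) follows because $u$ itself is the fixed weak $L^2$-limit, so all subsequential limits $\xi_0$ yield the same $\nabla u$.
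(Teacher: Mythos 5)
There is a genuine gap, and it sits exactly at the point you flag as ``the one delicate point.'' Your test function $\varphi(x,t)\psi(x/\varepsilon)e_j$ must satisfy two requirements simultaneously: (a) $\psi$ must vanish near $\partial Y^{\ast}-\partial Y$ so that the integration by parts over $\Omega_{\varepsilon}$ produces no surface terms on the boundaries of the holes (this is what makes your starting identity true -- otherwise the distributional gradient of $\widetilde{u_{\varepsilon}}$ carries a singular measure on $\partial\Omega_{\varepsilon}$ and the identity fails); and (b) $\partial_{y_j}\psi$ must vanish on $Y^{\ast}$ so that the $\varepsilon^{-1}$ term disappears. Since $Y^{\ast}=Y-Y^{H}$ with $Y^{H}\subset\subset Y$ is connected for $N\geq 2$, a periodic $\psi$ that is constant on $Y^{\ast}$ and vanishes near $\partial Y^{H}$ is identically zero on $Y^{\ast}$. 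Your proposed fix -- taking $\psi\equiv 1$ on a neighbourhood of $Y^{\ast}$ -- secures (b) but destroys (a): the test function is then just $\varphi e_j$ on $\Omega_{\varepsilon}$, and the integration by parts leaves the surface integral $\int_{\partial\Omega_{\varepsilon}-\partial\Omega}u_{\varepsilon}\varphi\, n_j\,dS$, which is precisely the obstruction that makes homogenization in perforated domains without extension operators non-trivial (total hole surface area $\sim\varepsilon^{-1}$, and $u_{\varepsilon}$ is only controlled in $H^{1}(\Omega_{\varepsilon})$). So the class of admissible test functions in your scheme is trivial, and the final display is not established.

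The paper avoids this by using genuinely vector-valued test fields $v(x,t,y)\in D_{1}^{\ast}$: these satisfy $\operatorname{div}_{y}v=0$ in $Y^{\ast}$ (killing the $\varepsilon^{-1}$ term) together with support in $E_{\sharp}(Y^{\ast})$ (killing the hole-boundary terms) -- conditions that are compatible for divergence-free fields even though they are incompatible for fields of the form $\psi(y)e_j$. The real engine of the proof, entirely absent from your proposal, is then Lemma \ref{divergence-free condition}(ii) / Corollary \ref{en snabb rumsskala} (Allaire--Briane): every $\theta\in D(\Omega\times(0,T))^{N}$ arises as $\theta=\int_{Y^{\ast}}v\,dy$ for such a $v$ with $\Vert v\Vert_{L^{2}(\Omega\times(0,T)\times Y^{\ast})^{N}}\leq C\Vert\theta\Vert_{L^{2}(\Omega\times(0,T))^{N}}$. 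This surjectivity with norm control is what turns the limit identity into the boundedness of the functional $\theta\mapsto-\int_{0}^{T}\int_{\Omega}u\,\nabla_{x}\cdot\theta\,dx\,dt$ on $L^{2}(\Omega\times(0,T))^{N}$, whence $\nabla u\in L^{2}$ by Riesz representation. Your overall strategy (integrate by parts against an oscillating test function, identify the limit, read off the weak gradient) is the right shape, but without the divergence-free class and the representation lemma the argument cannot close.
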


\begin{definition}
\label{Lteta}Let $\left( D^{\ast }\right) ^{N}=D\left[ \Omega ;D_{\sharp
}(Y_{1}^{\ast }\times \ldots \times Y_{n}^{\ast })\right] ^{N}$ be the set
of smooth functions $v:\Omega \times Y_{1}^{\ast }\times \ldots Y_{n}^{\ast
}\rightarrow 
%TCIMACRO{\U{211d} }%
%BeginExpansion
\mathbb{R}
%EndExpansion
^{N}$ periodic in $(y_{1},\ldots ,y_{n})$ with compact support in $\Omega $
for $x$ and with their support contained in each of $E_{\sharp }(Y_{k}^{\ast
})$ for the respective variable $y_{k},$ $k=1,\ldots ,n.$
\end{definition}

Next Lemma is cited from \cite{AlBr}.

\begin{lemma}
\label{divergence-free condition} For any $k\in \left\{ 1,\ldots ,n\right\}
, $ let $D_{k}^{\ast }$ be the subset of $\left( D^{\ast }\right) ^{N}$
composed of functions satisfying a \textquotedblleft
generalised\textquotedblright\ divergence-free condition, i.e. 
\begin{equation*}
\left\{ 
\begin{array}{c}
D_{n}^{\ast }=\left\{ v\in (D^{\ast })^{N};\text{ }\func{div}%
_{y_{n}}v=0\right\} \text{,} \\ 
D_{k}^{\ast }=\left\{ v\in (D^{\ast })^{N};\text{ }\func{div}_{y_{n}}v=0%
\text{ and }\int_{Y_{j+1}}\ldots \text{ }\int_{Y_{n}}\func{div}_{y_{j}}v=0%
\text{ }\forall k\leqq j\leqq n-1\right\} \text{.}%
\end{array}%
\right.
\end{equation*}

\noindent These spaces have the following property:\smallskip

\noindent (i)\ Any function $\theta \in $ $D\left[ \Omega ;D_{\sharp
}(Y_{1}^{\ast }\times \ldots \times Y_{k}^{\ast })\right] ^{N}$ can be
expressed as the average of a function in $D_{k}^{\ast }$, i.e. there exists 
$v(x,y_{1,}\ldots ,y_{n})\in D_{k}^{\ast }$ such that%
\begin{equation*}
\theta =\int_{Y_{k+1}}\ldots \int_{Y_{n}}v\text{ }dy_{k+1}\ldots dy_{n}\text{%
.}
\end{equation*}%
(ii) Any function $\theta \in D(\Omega )^{N}$ can also be expressed as the
average of a function $v$ in $D_{n}^{\ast }$, such that 
\begin{equation*}
\theta =\int_{Y_{1}}\ldots \int_{Y_{n}}v\text{ }dy_{1}\ldots dy_{n}\text{
and }\left\Vert v\right\Vert _{L^{2}(\Omega \times Y_{1}^{\ast }\times
\ldots \times Y_{n}^{\ast })^{N}}\leqq C\left\Vert \theta \right\Vert
_{L^{2}(\Omega )^{N}}\text{,}
\end{equation*}%
where the constant $C$ is independent of $v$ and $\theta $.
\end{lemma}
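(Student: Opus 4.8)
The plan is to reduce the entire statement to a single-cell construction and then propagate it across the scales by tensorisation and induction on the number of fast variables. The one genuinely analytic ingredient is the following building block: for each canonical basis vector $e_i$ of $\mathbb{R}^N$ there is a field $w_i\in D_\sharp(Y^\ast)^N$ with $\func{div}_y w_i=0$ and $\int_Y w_i\,dy=e_i$. Everything else is bookkeeping that transports this block through the variables $y_1,\ldots,y_n$ while respecting the support conditions in each $Y_k^\ast$ and the nested divergence constraints defining $D_k^\ast$.

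To build $w_i$, first I would fix a cut-off $\phi\in C_\sharp^\infty(Y)$ with $\phi\equiv 0$ on a neighbourhood of $\overline{Y^H}$ and $\phi\equiv 1$ near $\partial Y$, so that the collar $\{0<\phi<1\}$ is compactly contained in $Y^\ast$ and away from $\partial Y$. The field $\phi e_i$ already vanishes on the hole and is periodic, but it is not solenoidal: $\func{div}(\phi e_i)=\partial_i\phi=:h$, which is supported in the collar and has zero mean by periodicity. I would then fix a Lipschitz domain $\omega$ with $\{0<\phi<1\}\Subset\omega\Subset Y^\ast$ and solve $\func{div}\,g=-h$ with a solution $g\in C_c^\infty(\omega)^N$ supplied by the Bogovskii (right-inverse of divergence) operator; extending $g$ by zero gives a smooth, $Y$-periodic field supported in $Y^\ast$. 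Setting $w_i=\phi e_i+g$ yields $\func{div}_y w_i=0$, and the identity $\int_Y g_\alpha\,dy=\int_Y y_\alpha h\,dy=\delta_{i\alpha}\bigl(1-\int_Y\phi\bigr)$ (one integration by parts, using periodicity and $\phi\equiv 1$ near $\partial Y$) forces $\int_Y w_i=e_i$ exactly, with no further correction needed. The bound $\|w_i\|_{L^2(Y^\ast)}\le C$ is immediate.

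With the block in hand, part (ii) is direct: choosing scalar cut-offs $\eta_l\in D_\sharp(Y_l^\ast)$ with $\int_{Y_l}\eta_l\,dy_l=1$ for $l<n$ and writing, for $\theta=(\theta_1,\ldots,\theta_N)\in D(\Omega)^N$,
\begin{equation*}
v(x,y_1,\ldots,y_n)=\sum_{i=1}^N\theta_i(x)\,\eta_1(y_1)\cdots\eta_{n-1}(y_{n-1})\,w_i(y_n),
\end{equation*}
one checks at once that $\func{div}_{y_n}v=0$ (so $v\in D_n^\ast$), that $\int_{Y_1}\cdots\int_{Y_n}v=\theta$, and that $\|v\|_{L^2(\Omega\times Y_1^\ast\times\cdots\times Y_n^\ast)}\le C\|\theta\|_{L^2(\Omega)}$ with $C$ depending only on the $\eta_l$ and $w_i$. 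For part (i) I would argue by downward induction on the number of fast variables, peeling off $y_n$ first: given the target $\theta(x,y_1,\ldots,y_k)$, set $v=\sum_i a_i\,w_i(y_n)$ with $a=(a_1,\ldots,a_N)$ to be found on $\Omega\times Y_1^\ast\times\cdots\times Y_{n-1}^\ast$. Because $\int_{Y_n}w_{i,\alpha}\,dy_n=\delta_{i\alpha}$ and $w_i$ depends only on $y_n$, integrating in $y_n$ turns the averaging requirement into $\int_{Y_{k+1}}\cdots\int_{Y_{n-1}}a=\theta$, and each constraint $\int_{Y_{j+1}}\cdots\int_{Y_n}\func{div}_{y_j}v=0$ into the corresponding constraint on $a$ with one fewer variable; thus $a$ must solve the same problem with $n$ replaced by $n-1$. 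Iterating down to $n=k$ leaves $\theta$ itself, which lies in $D_k^\ast$ precisely when $\func{div}_{y_k}\theta=0$, the natural compatibility condition carried by the admissible test fields.

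The main obstacle is the building block, and within it the solvability of $\func{div}\,g=-h$ with a solution that is simultaneously smooth, compactly supported inside the perforated cell, and periodic: this is where the perforation genuinely enters, since a constant field, which would trivially work in the unperforated cell, is excluded by the requirement $w_i\in D_\sharp(Y^\ast)$. I would handle a possibly non-star-shaped collar by taking $\omega$ to be a finite union of domains star-shaped with respect to balls, on each of which the classical Bogovskii estimates together with the preservation of smoothness and support apply. The remaining difficulty is purely combinatorial: verifying that the tensorisation in part (i) sends the full list of nested divergence conditions of $D_k^\ast$ at level $n$ onto those at level $n-1$, which is a direct computation using $\int_{Y_n}w_i=e_i$ and the fact that each $w_i$ depends only on its own fast variable.
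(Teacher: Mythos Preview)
The paper does not supply its own proof of this lemma: it simply refers the reader to Lemma~4.13 in \cite{AlBr}. Your proposal therefore cannot be compared line by line with anything in the present paper, but it can be measured against the argument being invoked, and on that score your construction is sound and follows the same blueprint. The core of Allaire's original single-scale proof (Lemma~2.10 of \cite{Al1}) and of the Allaire--Briane reiterated version is precisely the existence of a periodic solenoidal field supported away from the hole whose cell-average is a prescribed constant vector; your route via a cut-off $\phi e_i$ plus a Bogovskii correction $g$ is one of the standard ways to manufacture such a field, and your identity $\int_Y g_\alpha\,dy=\int_Y y_\alpha h\,dy$ (valid for \emph{any} compactly supported solution of $\nabla\cdot g=-h$, by the same integration by parts) pinning the average at exactly $e_i$ is correct. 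Part~(ii) then follows from the tensorisation you describe, and the $L^{2}$ bound is immediate.

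For part~(i) you rightly observe that the statement, as reproduced here, forces a compatibility condition: if $v\in D_k^{\ast}$ and $\theta=\int_{Y_{k+1}}\cdots\int_{Y_n}v\,dy_{k+1}\cdots dy_n$, then the constraint at level $j=k$ in the definition of $D_k^{\ast}$ yields $\nabla_{y_k}\cdot\theta=0$, so an arbitrary $\theta$ cannot be represented in this way. This is not a gap in your argument but an omission in the statement as transcribed; with that hypothesis in place your downward induction goes through cleanly. Since the present paper only ever uses the $n=1$ instance of~(ii), through Corollary~\ref{en snabb rumsskala}, the issue in~(i) has no bearing on the remainder of the paper.
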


\begin{proof}
See proof of Lemma 4.13 (ii) in \cite{AlBr}.\smallskip
\end{proof}

Further we need to state Corollary \ref{en snabb rumsskala} in order to make
a proof of the Proposition \ref{P2,2}. Hence, the next Corollary, which
means the special case of Lemma \ref{divergence-free condition} (ii) for $%
n=1 $. See also Lemma 2.10 in \cite{Al1}.

\begin{corollary}
\label{en snabb rumsskala} For $n=1$ the space $D_{1}^{^{\ast }}$ in Lemma %
\ref{divergence-free condition} has the following property: any function $%
\theta \in D(\Omega )^{N}$ can also be expressed as the average of a
function $v$ in $D_{1}^{^{\ast }}$, such that%
\begin{equation*}
\theta =\int_{Y^{\ast }}v(x,y)dy\text{ and }\left\Vert v\right\Vert
_{L^{2}(\Omega \times Y^{\ast })^{N}}\leq C\left\Vert \theta \right\Vert
_{L^{2}(\Omega )^{N}}\text{.}
\end{equation*}
\end{corollary}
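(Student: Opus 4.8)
The plan is to reduce the corollary to the case $n=1$ of Lemma~\ref{divergence-free condition}(ii), which is already available in the excerpt, and simply unwind the notation. First I would observe that for $n=1$ the set $(D^{\ast})^{N}$ of Definition~\ref{Lteta} is exactly $D\left[\Omega;D_{\sharp}(Y_{1}^{\ast})\right]^{N}$, i.e.\ smooth $\mathbb{R}^{N}$-valued functions $v(x,y)$ on $\Omega\times Y^{\ast}$, periodic in $y$, with compact support in $\Omega$ in the $x$-variable and with support contained in $E_{\sharp}(Y^{\ast})$ in the $y$-variable; and $D_{1}^{\ast}=D_{n}^{\ast}$ collapses to $\{v\in(D^{\ast})^{N}:\operatorname{div}_{y}v=0\}$ (the extra family of averaged-divergence conditions in the definition of $D_{k}^{\ast}$ is vacuous when $k=n=1$, since the index range $k\le j\le n-1$ is empty). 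So the claim to be proved is precisely the instance of Lemma~\ref{divergence-free condition}(ii) with a single oscillating variable.

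Next I would invoke Lemma~\ref{divergence-free condition}(ii) directly: given $\theta\in D(\Omega)^{N}$, it furnishes $v\in D_{n}^{\ast}$ with $\theta=\int_{Y_{1}}\cdots\int_{Y_{n}}v\,dy_{1}\cdots dy_{n}$ and $\Vert v\Vert_{L^{2}(\Omega\times Y_{1}^{\ast}\times\cdots\times Y_{n}^{\ast})^{N}}\le C\Vert\theta\Vert_{L^{2}(\Omega)^{N}}$. Specializing to $n=1$ gives $v\in D_{1}^{\ast}$ with $\theta=\int_{Y_{1}}v(x,y)\,dy$ and the stated norm bound over $\Omega\times Y_{1}^{\ast}$. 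The only remaining point is the domain of integration: since $v$ has support in the $y$-variable contained in $E_{\sharp}(Y^{\ast})$, the function $v(x,\cdot)$ vanishes on $Y\setminus Y^{\ast}=Y^{H}$, so $\int_{Y}v(x,y)\,dy=\int_{Y^{\ast}}v(x,y)\,dy$, and likewise the $L^{2}(\Omega\times Y)$ norm equals the $L^{2}(\Omega\times Y^{\ast})$ norm. Writing $Y$ for $Y_{1}$ and $Y^{\ast}$ for $Y_{1}^{\ast}$ then yields exactly the two assertions of the corollary.

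There is essentially no obstacle here; the content is entirely in Lemma~\ref{divergence-free condition} (equivalently Lemma~2.10 of \cite{Al1}), whose proof constructs $v$ by solving an auxiliary Stokes-type or Poisson-type problem on the perforated cell to obtain the divergence-free corrector with the uniform bound. The one place requiring a word of care is confirming that the generalized divergence-free conditions indexed by $k\le j\le n-1$ genuinely degenerate when $n=1$, so that $D_{1}^{\ast}$ really is just the $\operatorname{div}_{y}$-free subspace; this is immediate from the empty index set but worth stating explicitly. If one preferred a self-contained argument one could alternatively cite Lemma~2.10 in \cite{Al1} verbatim, but since Lemma~\ref{divergence-free condition} is stated above in the general form, the cleanest route is the specialization just described.
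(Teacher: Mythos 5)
Your proposal is correct and coincides with the paper's treatment: the corollary is presented there precisely as the $n=1$ specialization of Lemma \ref{divergence-free condition}(ii) (equivalently Lemma 2.10 in \cite{Al1}), with no further argument given. Your additional remarks — that the averaged-divergence conditions are vacuous for $n=1$ and that the support condition lets one replace $\int_{Y}$ by $\int_{Y^{\ast}}$ — are exactly the bookkeeping the paper leaves implicit.
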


\begin{proof}
(Proof of Proposition \ref{P2,2}) We%
%TCIMACRO{\U{b4}}%
%BeginExpansion
\'{}%
%EndExpansion
ll show that the limit $u\in L^{2}(0,T;H^{1}(\Omega ))$. We choose $v\in
D(\Omega \times (0,T)$; $C_{\sharp }^{\infty }(Y^{\ast }))^{N}$, such that $%
v\in D_{1}^{\ast }$ for\ any $t\in (0,T)$, in%
\begin{equation}
\int_{0}^{T}\int_{\Omega _{\varepsilon }}\nabla u_{\varepsilon }(x,t)\cdot
v(x,t,\frac{x}{\varepsilon })dxdt\text{.}  \label{epsint}
\end{equation}%
We assume that $\left\{ u_{\varepsilon }\right\} $ is bounded in $%
L^{2}(0,T;H^{1}\left( \Omega _{\varepsilon }\right) )$. Then $\left\{ \nabla
u_{\varepsilon }\right\} $ is bounded in $L^{2}(\Omega _{\varepsilon }\times
(0,T))^{N}$ and can be extended with zero to $\left\{ \widetilde{\nabla
u_{\varepsilon }}\right\} $ that is bounded in $L^{2}(\Omega \times
(0,T))^{N}$. We can now write (\ref{epsint}) as 
\begin{equation}
\int_{0}^{T}\int_{\Omega }\widetilde{\nabla u_{\varepsilon }}(x,t)\cdot
v(x,t,\frac{x}{\varepsilon })dxdt  \label{epsintomega}
\end{equation}%
and obtain by Theorem \ref{compactness result}, up to a subsequence,%
\begin{gather*}
\int_{0}^{T}\int_{\Omega }\widetilde{\nabla u_{\varepsilon }}(x,t)\chi
_{Y^{\ast }}(\frac{x}{\varepsilon })\cdot v(x,t,\frac{x}{\varepsilon })dxdt
\\
\rightarrow \int_{0}^{T}\int_{\Omega }\int_{0}^{1}\int_{Y^{\ast
}}w_{0}(x,t,y,s)v(x,t,y)dydsdxdt \\
\rightarrow \int_{0}^{T}\int_{\Omega }\int_{Y^{\ast }}\left(
\int_{0}^{1}w_{0}(x,t,y,s)ds\right) v(x,t,y)dydxdt \\
=\int_{0}^{T}\int_{\Omega }\int_{Y^{\ast }}\eta _{0}(x,t,y)\cdot
v(x,t,y)dydxdt
\end{gather*}%
for some $w_{0}\in L^{2}(\Omega \times (0,T)\times Y^{\ast }\times S)^{N}$
and $\eta _{0}=\int_{0}^{1}w_{0}ds\in L^{2}(\Omega \times (0,T)\times
Y^{\ast })^{N}$, when $\varepsilon $ goes to zero.

\noindent We now integrate by parts in (\ref{epsint}) and let $\varepsilon $
go to $0$. Further, we let $\widetilde{u_{\varepsilon }}$ be an extension of 
$u_{\varepsilon }$ from $\Omega _{\varepsilon }\times (0,T)$ to $\Omega
\times (0,T)$. Clearly, $\left\{ \widetilde{u_{\varepsilon }}\right\} $ is
bounded in $L^{2}(\Omega \times (0,T))$ if $\left\{ u_{\varepsilon }\right\} 
$ is bounded in $L^{2}(\Omega _{\varepsilon }\times (0,T))$. We get%
\begin{equation*}
\underset{\varepsilon \rightarrow 0}{\lim }\int_{0}^{T}\int_{\Omega
_{\varepsilon }}\nabla u_{\varepsilon }(x,t)\cdot v(x,t,\frac{x}{\varepsilon 
})dxdt=\underset{\varepsilon \rightarrow 0}{\lim }\int_{0}^{T}\int_{\Omega
_{\varepsilon }}-u_{\varepsilon }(x,t)\nabla \cdot v(x,t,\frac{x}{%
\varepsilon })dxdt
\end{equation*}%
\begin{equation*}
=\underset{\varepsilon \rightarrow 0}{\lim }\int_{0}^{T}\int_{\Omega }-%
\widetilde{u_{\varepsilon }}(x,t)\left( \nabla _{x}\cdot v(x,t,\frac{x}{%
\varepsilon })+\varepsilon ^{-1}\nabla _{y}\cdot v(x,t,\frac{x}{\varepsilon }%
)\right) dxdt
\end{equation*}%
\begin{equation*}
\underset{}{=\lim_{\varepsilon \rightarrow 0}}\int_{0}^{T}\int_{\Omega }-%
\widetilde{u_{\varepsilon }}(x,t)\nabla _{x}\cdot v(x,t,\frac{x}{\varepsilon 
})dxdt
\end{equation*}%
since $\nabla _{y}\cdot v=0.$ When $\varepsilon $ tends to zero we get from
assumption (\ref{assumption}) 
\begin{eqnarray}
&&-\int_{0}^{T}\int_{\Omega }\int_{Y}u(x,t)\chi _{Y^{\ast }}(y)\nabla
_{x}\cdot v(x,t,y)dydxdt  \label{Tvaskalegransdivtata} \\
&=&-\int_{0}^{T}\int_{\Omega }\int_{Y^{\ast }}u(x,t)\nabla _{x}\cdot
v(x,t,y)dydxdt\text{.}  \notag
\end{eqnarray}%
This means that 
\begin{gather}
\int_{0}^{T}\int_{\Omega }\int_{Y^{\ast }}\eta _{0}(x,t,y)\cdot
v(x,t,y)dydxdt  \label{Tvaskalegransparning} \\
=-\int_{0}^{T}\int_{\Omega }\int_{Y^{\ast }}u(x,t)\nabla _{x}\cdot
v(x,t,y)dydxdt  \notag \\
=\int_{0}^{T}\int_{\Omega }u(x,t)\left( \nabla _{x}\cdot \int_{Y^{\ast
}}v(x,t,y)dy\right) dxdt\text{.}  \notag
\end{gather}%
A simple modification of the proof of Lemma 4.13 (ii) in \cite{AlBr} (see
our Lemma \ref{divergence-free condition} and Corollary \ref{en snabb
rumsskala}) means that any $\theta \in D(\Omega \times (0,T))^{N}$ can be
expressed as 
\begin{equation*}
\theta (x,t)=\int_{Y^{\ast }}v(x,t,y)dy
\end{equation*}%
for some testfunction $v$ of the type we use. Moreover,%
\begin{equation*}
\left\Vert v\right\Vert _{L^{2}(\Omega \times (0,T)\times Y^{\ast
})^{N}}\leq C\left\Vert \theta \right\Vert _{L^{2}(\Omega \times (0,T))^{N}}%
\text{.}
\end{equation*}%
Hence, 
\begin{equation*}
\left\vert -\int_{0}^{T}\int_{\Omega }u(x,t)\left( \nabla _{x}\cdot \theta
(x,t)\right) dxdt\right\vert =\left\vert \int_{0}^{T}\int_{\Omega
}\int_{Y^{\ast }}\eta _{0}(x,t,y)\cdot v(x,t,y)dydxdt\right\vert
\end{equation*}%
\begin{equation*}
\leq C\left\Vert \eta _{0}\right\Vert _{L^{2}(\Omega \times T\times Y^{\ast
})^{N}}\left\Vert \theta \right\Vert _{L^{2}(\Omega \times (0,T))^{N}}\text{.%
}
\end{equation*}%
This means that%
\begin{equation*}
F(\theta )=-\int_{0}^{T}\int_{\Omega }u(x,t)\nabla _{x}\cdot \theta (x,t)dxdt
\end{equation*}%
is a bounded linear functional on $L^{2}(\Omega \times (0,T))^{N}$ for
arbitrary \newline
$\theta \in D(\Omega \times (0,T))^{N}$. By continuous extension, (e.g.
Theorem 6.14 in \cite{Gr}) this also applies to all $\theta \in L^{2}(\Omega
\times 0,T)^{N}$. This means that, for some $r_{0}\in L^{2}(\Omega \times
(0,T))^{N}$, 
\begin{equation*}
F(\theta )=-\int_{0}^{T}\int_{\Omega }u(x,t)\left( \nabla _{x}\cdot \theta
(x,t)\right) dxdt=\int_{0}^{T}\int_{\Omega }r_{0}(x,t)\cdot \theta (x,t)dxdt
\end{equation*}%
and hence the distributional gradient $\nabla u$ of $u$ belongs to $%
L^{2}(\Omega \times (0,T))^{N}$. We already knew that $u\in L^{2}(\Omega
\times (0,T))$ and hence $u\in L^{2}(0,T;H^{1}\left( \Omega \right) )$.
\end{proof}

We also cite the Lemma 4.14 of Allaire and Briane in \cite{AlBr}, because we
will use it in the following theorem.

\begin{lemma}
\label{cited} Let $H^{\ast }$ be the space of 
%TCIMACRO{\U{b4}}%
%BeginExpansion
\'{}%
%EndExpansion
generalised%
%TCIMACRO{\U{b4} }%
%BeginExpansion
\'{}
%EndExpansion
divergence-free functions in $L^{2}[\Omega ;$ $L_{\sharp }^{2}(Y_{1}^{\ast
}\times \ldots \times Y_{n}^{\ast })]^{N}$ defined by 
\begin{equation*}
v\in H^{\ast }\Leftrightarrow \left\{ 
\begin{array}{l}
\func{div}_{y_{n}}v=0\text{ in }Y_{n}^{\ast } \\ 
v\cdot n=0\text{ on }\partial Y_{n}^{\ast }-\partial Y_{n}%
\end{array}%
\right.
\end{equation*}%
and 
\begin{equation*}
\left\{ 
\begin{array}{l}
\int_{Y_{k+1}}\ldots \int_{Y_{n}}\func{div}_{y_{k}}v=0\text{ in }Y_{k}^{\ast
} \\ 
\int_{Y_{k+1}}\ldots \int_{Y_{n}}v\cdot n=0\text{ on }\partial Y_{k}^{\ast
}-\partial Y_{k}%
\end{array}%
\right.
\end{equation*}%
for all $1\leqq k\leqq n-1$.

The subspace $H^{\ast }$ has the following properties:\smallskip

(i) $(D^{\ast })^{N}\cap H^{\ast }$ is dense into $H^{\ast }$.

(ii)\ The orthogonal of $H^{\ast }$ is 
\begin{equation*}
(H^{\ast })^{\perp }=\left\{ \sum_{k=1}^{n}\nabla
_{y_{k}}q_{k}(x,y_{1},\ldots ,y_{k})\text{ with }q_{k}\in L^{2}\left[ \Omega
\times Y_{1}^{\ast }\times \ldots \times Y_{k-1}^{\ast };H_{\sharp
}^{1}(Y_{k}^{\ast })/%
%TCIMACRO{\U{211d} }%
%BeginExpansion
\mathbb{R}
%EndExpansion
\right] \right\} \text{.}
\end{equation*}%
\ \ \ \ \ \ \ \ \ \ \ \ \ \ \ \ \ \ \ \ \ \ \ \ \ \ \ \ \ \ \ \ \ \ \ \ \ \
\ \ \ \ \ \ \ \ \ \ \ \ \ \ \ \ \ \ \ \ \ \ \ \ \ \ \ \ \ \ \ \ \ \ \ \ \ \
\ \ \ \ \ \ \ \ \ \ \ \ \ \ \ \ \ \ \ \ \ \ \ \ \ \ \ \ \ \ \ \ \ \ \ \ \ \
\ \ \ \ \ \ \ \ \ \ \ \ \ \ \ \ \ \ \ \ \ \ \ \ \ \ \ \ \ \ \ \ \ \ \ \ \ \
\ \ \ \ \ \ \ \ \ \ \ \ \ \ \ \ \ \ \ \ \ \ \ \ \ \ \ \ \ \ \ \ \ \ \ \ \ \
\ \ \ \ \ \ \ \ \ \ \ \ \ \ \ \ \ \ \ \ \ \ \ \ \ \ \ \ \ \ \ \ \ \ \ \ \ \
\ \ \ \ \ \ \ \ \ \ \ \ \ \ \ \ \ \ \ \ \ \ \ \ \ \ \ \ \ \ \ \ \ \ \ \ \ \
\ \ \ \ \ \ \ \ \ \ \ \ \ \ \ \ \ \ \ \ \ \ \ \ \ \ \ \ \ \ \ \ \ \ \ \ \ \
\ \ \ \ \ \ \ \ \ \ \ \ \ \ \ \ \ \ \ \ \ \ \ \ \ \ \ \ \ \ \ \ \ \ \ \ \ \
\ \ \ \ \ \ \ \ \ \ \ \ \ \ \ \ \ \ \ \ \ \ \ \ \ \ \ \ \ \ \ \ \ \ \ \ \ \
\ \ \ \ \ \ \ \ \ \ \ \ \ \ \ \ \ \ \ \ \ \ \ \ \ \ \ \ \ \ \ \ \ \ \ \ \ \
\ \ \ \ \ \ \ \ \ \ \ \ \ \ \ \ \ \ \ \ \ \ \ \ \ \ \ \ \ \ \ \ \ \ \ \ \ \
\ \ \ \ \ \ \ \ \ \ \ \ \ \ \ \ \ \ \ \ \ \ \ \ \ \ \ \ \ \ \ \ \ \ \ \ \ \
\ \ \ \ \ \ \ \ \ \ \ \ \ \ \ \ \ \ \ \ \ \ \ \ \ \ \ \ \ \ \ \ \ \ \ \ \ \
\ \ \ \ \ \ \ \ \ \ \ \ \ \ \ \ \ \ \ \ \ \ \ \ \ \ \ \ \ \ \ \ \ \ \ \ \ \
\ \ \ \ \ \ \ \ \ \ \ \ \ \ \ \ \ \ \ \ \ \ \ \ \ \ \ \ \ \ \ \ \ \ 
\end{lemma}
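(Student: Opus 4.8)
The plan is to read Lemma~\ref{cited} as an iterated, $x$-parametrized version of the Helmholtz--Weyl (de Rham) orthogonal decomposition of $L^{2}$ vector fields on a perforated periodic cell, and to prove (i) and (ii) simultaneously by induction on the number $n$ of spatial microscales, peeling off the innermost scale $y_{n}$ at each step. Write
\[
G_{k}=\Bigl\{\,\nabla_{y_{k}}q_{k}\ :\ q_{k}\in L^{2}\bigl[\Omega\times Y_{1}^{\ast}\times\cdots\times Y_{k-1}^{\ast};\,H_{\sharp}^{1}(Y_{k}^{\ast})/\mathbb{R}\bigr]\,\Bigr\},
\]
so that assertion (ii) becomes $(H^{\ast})^{\perp}=\sum_{k=1}^{n}G_{k}$. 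The inclusion $\sum_{k}G_{k}\subseteq(H^{\ast})^{\perp}$ is the routine direction: for $v\in H^{\ast}$ and $\nabla_{y_{k}}q_{k}\in G_{k}$, integrate by parts in $y_{k}$; since $q_{k}$ does not depend on $y_{k+1},\ldots,y_{n}$ the volume term equals $-\int q_{k}\bigl(\int_{Y_{k+1}^{\ast}}\cdots\int_{Y_{n}^{\ast}}\nabla_{y_{k}}\cdot v\bigr)=0$ by the level-$k$ divergence condition (for $k=n$ simply $\nabla_{y_{n}}\cdot v=0$), the boundary integral over $\partial Y_{k}^{\ast}-\partial Y_{k}$ equals $\int q_{k}\bigl(\int_{Y_{k+1}^{\ast}}\cdots\int_{Y_{n}^{\ast}}v\cdot n\bigr)=0$ by the matching Neumann condition, and the contribution from $\partial Y_{k}$ cancels by $Y_{k}$-periodicity. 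The work is in the reverse inclusion, together with density.

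The base case $n=1$ is the analytic core. Here $H^{\ast}=\{v\in L^{2}[\Omega;L_{\sharp}^{2}(Y^{\ast})]^{N}:\nabla_{y}\cdot v=0\text{ in }Y^{\ast},\ v\cdot n=0\text{ on }\partial Y^{\ast}-\partial Y\}$ and $G_{1}$ is the range of $\nabla_{y}:L^{2}[\Omega;H_{\sharp}^{1}(Y^{\ast})/\mathbb{R}]\to L^{2}[\Omega;L_{\sharp}^{2}(Y^{\ast})]^{N}$. Because $Y^{H}\subset\subset Y$ has smooth boundary, the perforated cell $Y^{\ast}$ is connected and Lipschitz, so the Poincar\'{e}--Wirtinger inequality holds on $Y^{\ast}$; hence the quotient norm on $H_{\sharp}^{1}(Y^{\ast})/\mathbb{R}$ is equivalent to $\|\nabla_{y}\cdot\|_{L^{2}}$ and $\nabla_{y}$ is, after integrating the inequality in $x$, bounded below, so it has closed range. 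Its Hilbert adjoint is $-\nabla_{y}\cdot$ endowed with the natural boundary condition $v\cdot n=0$ on $\partial Y^{\ast}-\partial Y$, whose kernel is exactly $H^{\ast}$; the identity $(\operatorname{range}\nabla_{y})^{\perp}=\ker(\nabla_{y}^{\ast})$ together with closedness gives $(H^{\ast})^{\perp}=\operatorname{range}\nabla_{y}=G_{1}$. Since the defining functionals of $H^{\ast}$ are continuous, the smooth subspace $(D^{\ast})^{N}\cap H^{\ast}$ has the same annihilator, which yields density (i) at $n=1$; and the $x$-dependence is controlled by the uniform Poincar\'{e} constant plus a measurable-selection argument in the separable Hilbert space $H_{\sharp}^{1}(Y^{\ast})/\mathbb{R}$, so that $q_{1}$ lands in the stated Bochner space with $\|q_{1}\|\le C\|g\|$.

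For $n>1$, let $g\in(H^{\ast})^{\perp}$. First test $g$ against those $v\in H^{\ast}$ with vanishing innermost average, $\int_{Y_{n}^{\ast}}v\,dy_{n}=0$: such $v$ are precisely the $y_{n}$-divergence-free, $y_{n}$-Neumann fields with zero $Y_{n}^{\ast}$-mean, and they automatically fulfil the level-$1,\ldots,n-1$ conditions, hence lie in $H^{\ast}$. Applying the $n=1$ argument in the variable $y_{n}$ with $(x,y_{1},\ldots,y_{n-1})$ a measurable parameter produces $q_{n}\in L^{2}[\Omega\times Y_{1}^{\ast}\times\cdots\times Y_{n-1}^{\ast};H_{\sharp}^{1}(Y_{n}^{\ast})/\mathbb{R}]$ such that $g-\nabla_{y_{n}}q_{n}$ becomes, after averaging over $Y_{n}^{\ast}$, independent of $y_{n}$. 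One then checks that this averaged remainder, viewed as a field on $\Omega\times Y_{1}^{\ast}\times\cdots\times Y_{n-1}^{\ast}$, is orthogonal to the $(n-1)$-scale space $H^{\ast}$, so the induction hypothesis expresses it as $\sum_{k=1}^{n-1}\nabla_{y_{k}}q_{k}$ and therefore $g=\sum_{k=1}^{n}\nabla_{y_{k}}q_{k}$; the density statement descends the same way, using that mollification in $y_{n}$ and in the remaining variables preserves the linear (iterated) divergence constraints and, thanks to $Y^{H}\subset\subset Y$, keeps supports inside each $E_{\sharp}(Y_{k}^{\ast})$ and inside $\Omega$, with the error in the normal traces removed by a smooth gradient correction from a Poisson problem on $Y_{k}^{\ast}$.

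I expect the decisive obstacle to be the descent step: one cannot simply iterate the single-cell statement, because after subtracting $\nabla_{y_{n}}q_{n}$ the remainder must be re-identified as a genuine $(n-1)$-scale object, which requires controlling the $Y_{n}^{\ast}$-average of $y_{n}$-divergence-free Neumann fields and verifying that the whole hierarchy of averaged-divergence and averaged-normal-trace constraints descends correctly. The secondary technical point is to keep the dependence on the outer variables measurable with a uniform bound, so that each $q_{k}$ lies in the prescribed Bochner space; once these are in place the rest is integration by parts and regularization.
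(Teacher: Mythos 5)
The paper offers no proof of this lemma: it is quoted verbatim as Lemma 4.14 of Allaire and Briane \cite{AlBr}, so your argument can only be compared with that source. What you propose is essentially a faithful reconstruction of the Allaire--Briane strategy. The inclusion $\sum_{k}G_{k}\subseteq (H^{\ast })^{\perp }$ by integration by parts in $y_{k}$ is correct as you state it, and the base case $n=1$ --- Poincar\'{e}--Wirtinger on the connected perforated cell, closedness of the range of $\nabla _{y}$, and the identity between $(\mathrm{range}\,\nabla _{y})^{\perp }$ and the kernel of the adjoint divergence operator with the Neumann condition on $\partial Y^{\ast }-\partial Y$ --- is exactly the argument behind Theorem 2.9 of \cite{Al1}; since the present paper only ever invokes the lemma for $n=1$ (in Theorem \ref{first in line}), this is also the only case that matters here. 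Two points in your induction step deserve attention. First, the ``decisive obstacle'' you correctly identify --- showing that the $y_{n}$-independent remainder $g-\nabla _{y_{n}}q_{n}$ is orthogonal to the $(n-1)$-scale space --- requires that every admissible $(n-1)$-scale test field be realizable as the $Y_{n}$-average of an element of the $n$-scale $H^{\ast }$; you name this but do not prove it. It is, however, precisely the content of Lemma \ref{divergence-free condition} (i) (Lemma 4.13 of \cite{AlBr}), already quoted in the paper, so this gap closes by citation rather than by new work. Second, the density statement (i) is the genuinely delicate part in a perforated cell: plain mollification destroys both the support condition in $E_{\sharp }(Y_{k}^{\ast })$ and the normal-trace conditions, and the ``gradient correction from a Poisson problem'' you gesture at must be carried out at every level of the hierarchy simultaneously while preserving the averaged constraints at the levels below; as written this is a sketch, not a proof, and in \cite{AlBr} it occupies its own argument. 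With those two ingredients supplied, your outline is sound and coincides in substance with the original proof.
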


We first find a characterization of the $(2,2)$-scale limit for $\left\{
\nabla u_{\varepsilon }\right\} $ under certain assumptions.

\begin{theorem}
\label{first in line}Assume that $\left\{ u_{\varepsilon }\right\} $ is
bounded in $L^{2}(0,T;H^{1}(\Omega _{\varepsilon }))$ and for any $v_{1}\in
D(\Omega )$, $c_{1}\in D(0,T)$, $c_{2}\in C_{\sharp }^{\infty }(0,1)$ 
\begin{equation}
\varepsilon ^{r}\int_{0}^{T}\int_{\Omega _{\varepsilon }}u_{\varepsilon
}(x,t)v_{1}(x)\partial _{t}\left( c_{1}(t)c_{2}\left( \frac{t}{\varepsilon
^{r}}\right) \right) dxdt\rightarrow 0  \label{name 3}
\end{equation}%
for some $r>0.$

\noindent Then, up to a subsequence,%
\begin{eqnarray}
&&\int_{0}^{T}\int_{\Omega _{\varepsilon }}\nabla u_{\varepsilon }(x,t)\cdot
v(x,t,\frac{x}{\varepsilon },\frac{t}{\varepsilon ^{r}})dxdt  \label{name22}
\\
&\rightarrow &\int_{0}^{T}\int_{\Omega }\int_{0}^{1}\int_{Y^{\ast }}(\nabla
u(x,t)+\nabla _{y}u_{1}(x,t,y,s))\cdot v(x,t,y,s)dydsdxdt,  \notag
\end{eqnarray}%
\textit{where }$u\in L^{2}(0,T;H^{1}(\Omega ))$, $u_{1}\in L^{2}[\Omega
\times (0,T)\times (0,1);$ $H_{\sharp }^{1}(Y^{\ast })/%
%TCIMACRO{\U{211d} }%
%BeginExpansion
\mathbb{R}
%EndExpansion
]$, for any $v\in L^{2}(\Omega \times (0,T);C_{\sharp }\left( \mathcal{Y}%
_{1,1}\right) )^{N}$.
\end{theorem}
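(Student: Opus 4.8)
The plan is to characterize the $(2,2)$-scale limit of $\{\widetilde{\nabla u_\varepsilon}\}$ by first invoking the compactness result Theorem \ref{compactness result}. Since $\{u_\varepsilon\}$ is bounded in $L^2(0,T;H^1(\Omega_\varepsilon))$, the zero-extensions $\{\widetilde{\nabla u_\varepsilon}\}$ are bounded in $L^2(\Omega\times(0,T))^N$, so up to a subsequence $\widetilde{\nabla u_\varepsilon}\overset{2,2}{\rightharpoonup}\zeta_0(x,t,y,s)$ for some $\zeta_0\in L^2(\Omega\times(0,T)\times\mathcal{Y}_{1,1})^N$, and $\zeta_0$ is supported in $Y^\ast$ in the $y$-variable (because $\widetilde{\nabla u_\varepsilon}=\chi_{Y^\ast}(x/\varepsilon)\widetilde{\nabla u_\varepsilon}$). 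Similarly $\{\widetilde{u_\varepsilon}\}$ is bounded in $L^2(\Omega\times(0,T))$ and by passing to a further subsequence $\widetilde{u_\varepsilon}\overset{2,2}{\rightharpoonup}u_\star(x,t,y,s)$. The first task is to show that this limit has the form $u(x,t)\chi_{Y^\ast}(y)$ — i.e. it is independent of $y$ and $s$ on $Y^\ast$. The $s$-independence is exactly where hypothesis (\ref{name 3}) is used: testing the equation $\int\widetilde{u_\varepsilon}\,v_1(x)c_1(t)\partial_s c_2(t/\varepsilon^r)\,dxdt$ against $\varepsilon^r$-scaled time derivatives and using (\ref{name 3}) forces $\int_0^1 u_\star\,\partial_s c_2\,ds=0$, hence $\partial_s u_\star=0$. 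The $y$-independence on $Y^\ast$ follows the classical Nguetseng--Allaire argument: test $\int\widetilde{\nabla u_\varepsilon}\cdot\varepsilon\, w(x,t,x/\varepsilon,t/\varepsilon^r)$ with $w$ supported in $E_\sharp(Y^\ast)$, integrate by parts, and let $\varepsilon\to 0$ to get $\int u_\star\,\nabla_y\cdot w=0$, forcing $\nabla_y u_\star=0$ on $Y^\ast$. Once $u_\star=u(x,t)\chi_{Y^\ast}(y)$, Proposition \ref{P2,2} gives $u\in L^2(0,T;H^1(\Omega))$.

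Next I would identify $\zeta_0$. Write $\zeta_0=\int_0^1 w_0\,ds$ as in the proof of Proposition \ref{P2,2} if convenient, but more directly: for any test function $v\in(D^\ast)^N$ of the type in Definition \ref{Lteta} adapted to include the time-variable $s$ (smooth, $Y^\ast$-periodic in $y$, $S$-periodic in $s$, compactly supported in $\Omega$ in $x$, support in $E_\sharp(Y^\ast)$ in $y$), one passes to the limit in $\int_0^T\!\int_{\Omega_\varepsilon}\nabla u_\varepsilon\cdot v(x,t,x/\varepsilon,t/\varepsilon^r)$ in two ways. Directly, it converges to $\int\!\int\!\int\zeta_0\cdot v$. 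Via integration by parts in $x$, it equals $-\int\widetilde{u_\varepsilon}(\nabla_x\cdot v+\varepsilon^{-1}\nabla_y\cdot v)$; choosing $v$ with $\operatorname{div}_y v=0$ kills the singular term, and the remaining piece converges by the established form of $u_\star$ to $-\int\!\int\!\int u(x,t)\chi_{Y^\ast}(y)\nabla_x\cdot v=-\int\!\int\!\int_{Y^\ast}u\,\nabla_x\cdot v$. Comparing, $\int\!\int\!\int_{Y^\ast}\zeta_0\cdot v = -\int\!\int\!\int_{Y^\ast}u\,\nabla_x\cdot v$ for all divergence-free $v$ of this kind; integrating $\nabla_x$ by parts back and using that any $\theta\in D(\Omega\times(0,T))^N$ is the $Y^\ast$-average of such a $v$ (the modification of Lemma \ref{divergence-free condition}(ii)/Corollary \ref{en snabb rumsskala} noted in the proof of Proposition \ref{P2,2}) yields $\int_{Y^\ast}\zeta_0\,dy = \chi \cdot$(something) — more precisely $\zeta_0-\nabla u\,\chi_{Y^\ast}$ is $L^2$-orthogonal, for a.e. $(x,t)$, to the space of divergence-free periodic fields on $Y^\ast$ with the correct Neumann condition on $\partial Y^H$.

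That orthogonality is exactly the hypothesis of Lemma \ref{cited}: with $n=1$ and an extra inert parameter $s$, the space $H^\ast$ is the set of $v\in L^2[\Omega\times(0,T)\times(0,1);L^2_\sharp(Y^\ast)]^N$ with $\operatorname{div}_y v=0$ in $Y^\ast$ and $v\cdot n=0$ on $\partial Y^\ast-\partial Y$, and its orthogonal complement is $\{\nabla_y q : q\in L^2[\Omega\times(0,T)\times(0,1);H^1_\sharp(Y^\ast)/\mathbb{R}]\}$. Hence $\zeta_0(x,t,y,s)-\nabla u(x,t)=\nabla_y u_1(x,t,y,s)$ for some $u_1\in L^2[\Omega\times(0,T)\times(0,1);H^1_\sharp(Y^\ast)/\mathbb{R}]$, which is precisely the claimed characterization (\ref{name22}); the convergence statement then follows by density of $(D^\ast)^N\cap H^\ast$ in $H^\ast$ combined with $L^2[\Omega\times(0,T);C_\sharp(\mathcal{Y}_{1,1})]^N$-density, so that the limit relation extends from the divergence-free test class to all admissible $v$. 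I expect the main obstacle to be the careful bookkeeping in the integration-by-parts / density argument that upgrades the identity from the restricted divergence-free test functions in $(D^\ast)^N$ to general $v\in L^2(\Omega\times(0,T);C_\sharp(\mathcal{Y}_{1,1}))^N$, together with verifying that Lemma \ref{cited} applies verbatim once the passive time-variable $s$ is adjoined; the $s$-independence of $u_\star$ extracted from (\ref{name 3}) and the suppression of the $\varepsilon^{-1}\nabla_y\cdot v$ term are comparatively routine.
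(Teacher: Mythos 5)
Your proposal is correct and follows essentially the same route as the paper: establish $y$-independence of the $(2,2)$-scale limit of $\widetilde{u_\varepsilon}$ by testing against $\varepsilon$-scaled gradients supported in $E_\sharp(Y^\ast)$, extract $s$-independence from hypothesis (\ref{name 3}), invoke Proposition \ref{P2,2} for the regularity of $u$, and then identify the gradient limit by testing with divergence-free fields in $(D^\ast)^N\cap H^\ast$ and applying the orthogonality characterization of Lemma \ref{cited} to obtain $\nabla_y u_1$. The only differences are cosmetic (order of the $y$- and $s$-independence steps, and your explicit mention of the final density upgrade to general test functions, which the paper leaves implicit).
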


\begin{proof}
We first show that the $(2,2)$-scale limit $u_{0}$ for $\overset{}{%
\widetilde{u_{\varepsilon }}}$ does not depend on $y$. Indeed, integration
by parts gives us%
\begin{eqnarray}
&&\varepsilon \int_{0}^{T}\int_{\Omega _{\varepsilon }}\nabla u_{\varepsilon
}(x,t)\cdot v_{1}(x)v_{2}\left( \frac{x}{\varepsilon }\right)
c_{1}(t)c_{2}\left( \frac{t}{\varepsilon ^{r}}\right) dxdt  \notag \\
&=&-\int_{0}^{T}\int_{\Omega }\overset{}{\widetilde{u_{\varepsilon }}}(x,t)%
\left[ v_{1}(x)\nabla _{y}\cdot v_{2}\left( \frac{x}{\varepsilon }\right)
+\varepsilon \nabla _{x}v_{1}(x)\cdot v_{2}\left( \frac{x}{\varepsilon }%
\right) \right]  \label{name2} \\
&&\times \left( c_{1}(t)c_{2}\left( \frac{t}{\varepsilon ^{r}}\right)
\right) dxdt\text{,}  \notag
\end{eqnarray}%
for any $v_{1}\in D(\Omega )$ and $v_{2}\in C_{\sharp }^{\infty }(Y)^{N}$
with $v_{2}(y)=0$ for $y\in Y-Y^{\ast }$ and $c_{1}\in D(0,T),$ $c_{2}\in
C_{\sharp }^{\infty }(0,1)$. Passing to the limit on both sides leads to%
\begin{equation*}
-\int_{0}^{T}\int_{\Omega }\int_{0}^{1}\int_{Y^{\ast
}}u_{0}(x,t,y,s)v_{1}(x)\nabla _{y}\cdot v_{2}(y)c_{1}(t)c_{2}(s)dydsdxdt=0.
\end{equation*}%
This implies that $u_{0}$ does not depend on $y$ in $Y^{\ast }$, i.e. there\
exists \newline
$u\ \in L^{2}(\Omega \times (0,T)\times (0,1))$ such that%
\begin{equation*}
u_{0}(x,t,y,s)=u(x,t,s)\chi _{Y^{\ast }}(y)\text{.}
\end{equation*}%
Let us then show that, by assumption%
\begin{equation*}
\varepsilon ^{r}\int_{0}^{T}\int_{\Omega _{\varepsilon }}u_{\varepsilon
}(x,t)v_{1}(x)\partial _{t}\left( c_{1}(t)c_{2}\left( \frac{t}{\varepsilon
^{r}}\right) \right) dxdt\rightarrow 0\text{,}
\end{equation*}%
$u_{0}$ does not depend of $s$. We rewrite the form (\ref{name 3}) as%
\begin{eqnarray*}
&&\int_{0}^{T}\int_{\Omega }\varepsilon ^{r}\widetilde{u}_{\varepsilon
}(x,t)v_{1}(x)\left( \partial _{t}c_{1}(t)\right) c_{2}(\frac{t}{\varepsilon
^{r}})dxdt \\
&&+\int_{0}^{T}\int_{\Omega }\widetilde{u}_{\varepsilon
}(x,t)v_{1}(x)c_{1}(t)\left( \partial _{s}c_{2}^{{}}\left( \frac{t}{%
\varepsilon ^{r}}\right) \right) dxdt\text{,}
\end{eqnarray*}%
where $v_{1}\in D(\Omega )$, $c_{1}\in D(0,T)$ and $c_{2}\in C_{\sharp
}^{\infty }(0,1)$.

As $\varepsilon $ tends to zero, we obtain that%
\begin{equation*}
\int_{0}^{T}\int_{\Omega }\int_{0}^{1}\int_{Y^{^{\ast
}}}u(x,t,s)v_{1}(x)c_{1}(t)\partial _{s}c_{2}(s)dydsdxdt=0\text{.}
\end{equation*}%
Finally, applying the variational lemma, we get%
\begin{equation*}
\int_{0}^{1}u(x,t,s)\partial _{s}c_{2}(s)ds=0\text{,}
\end{equation*}%
for a.e. $(x,t)\in \Omega \times (0,T)$. We deduce now that $u_{0}$ does not
depend on the local time variable $s$. Hence, 
\begin{equation*}
\widetilde{u}_{\varepsilon }(x,t)\overset{2,2}{\rightharpoonup }u(x,t)\chi
_{Y^{\ast }}(y)
\end{equation*}%
and Proposition \ref{P2,2} yields that $u\in L^{2}(0,T;H^{1}(\Omega ))$. We
now let $v\in (D^{\ast })^{N}\cap H^{\ast }$ for $n=1$ (see Lemma \ref{cited}%
) and again $c_{1}\in D(0,T),$ $c_{2}\in C_{\sharp }^{\infty }(0,1).$
Integrating by parts in $\Omega _{\varepsilon }$ gives%
\begin{gather*}
\int_{0}^{T}\int_{\Omega _{\varepsilon }}\nabla u_{\varepsilon }(x,t)\cdot
v\left( x,\frac{x}{\varepsilon }\right) c_{1}(t)c_{2}\left( \frac{t}{%
\varepsilon ^{r}}\right) dxdt \\
=-\int_{0}^{T}\int_{\Omega _{\varepsilon }}u_{\varepsilon }(x,t)\nabla
_{x}\cdot v\left( x,\frac{x}{\varepsilon }\right) c_{1}(t)c_{2}\left( \frac{t%
}{\varepsilon ^{r}}\right) dxdt.
\end{gather*}%
Hence, passing to the two-scale limit yields that, for some $w_{0}\in
L^{2}(\Omega \times (0,T)\times Y^{\ast }\times (0,1))^{N}$,%
\begin{gather}
\int_{0}^{T}\int_{\Omega }\int_{0}^{1}\int_{Y^{\ast }}w_{0}(x,t,y,s)\cdot
v(x,y)c_{1}(t)c_{2}(s)dydxdsdt  \label{name 4} \\
=-\int_{0}^{T}\int_{\Omega }\int_{0}^{1}\int_{Y^{\ast }}u(x,t)\nabla
_{x}\cdot v(x,y)c_{1}(t)c_{2}(s)dydxdsdt  \notag \\
=\int_{0}^{T}\int_{\Omega }\int_{0}^{1}\int_{Y^{\ast }}\nabla u(x,t)\cdot
v(x,y)c_{1}(t)c_{2}(s)dydxdsdt.  \notag
\end{gather}%
We have 
\begin{equation}
\int_{0}^{T}\int_{\Omega }\int_{0}^{1}\int_{Y^{\ast }}(w_{0}(x,t,y,s)-\nabla
u(x,t))\cdot v(x,y)c_{1}(t)c_{2}(s)dydsdxdt=0\text{,}  \label{name 6}
\end{equation}%
which means that a.e. on $(0,T)\times (0,1)$%
\begin{equation*}
\int_{\Omega }\int_{Y^{\ast }}(w_{0}(x,t,y,s)-\nabla u(x,t))\cdot
v(x,y)dydx=0\text{.}
\end{equation*}%
Moreover, the orthogonal of $H^{\ast }$ are gradients. See Lemma \ref{cited}
for n=1. See also the proof of Theorem 2.9 in \cite{Al1}. This implies that
there exists a unique function $u_{1}$ in $L^{2}\left[ \Omega \times
(0,T)\times (0,1);\text{ }H_{\sharp }^{1}(Y^{\ast })/%
%TCIMACRO{\U{211d} }%
%BeginExpansion
\mathbb{R}
%EndExpansion
\right] $ such that%
\begin{equation*}
\widetilde{\nabla u}_{\varepsilon }(x,t)\overset{2,2}{\rightharpoonup }%
(\nabla u(x,t)+\nabla _{y}u_{1}(x,t,y,s))\chi _{Y^{\ast }}(y).
\end{equation*}%
See also \cite{Ho}.
\end{proof}

From the Theorem \ref{first in line} above, as a consequence, we have the
following corollary.

\begin{corollary}
\label{Corr1} Assume that $\left\{ u_{\varepsilon }\right\} $ is bounded in $%
L^{2}(0,T;H^{1}(\Omega _{\varepsilon }))$ and that (\ref{name 3}) holds in
Theorem \ref{first in line}. T\textit{hen}%
\begin{gather}
\int_{0}^{T}\int_{\Omega _{\varepsilon }}\varepsilon ^{-1}u_{\varepsilon
}(x,t)v_{1}(x)v_{2}\left( \frac{x}{\varepsilon }\right) c_{1}(t)c_{2}\left( 
\frac{t}{\varepsilon ^{r}}\right) dxdt  \label{equation 1} \\
\rightarrow \int_{0}^{T}\int_{\Omega }\int_{0}^{1}\int_{Y^{\ast
}}u_{1}(x,t,y,s)v_{1}(x)v_{2}(y)c_{1}(t)c_{2}(s)dydsdxdt  \notag
\end{gather}%
for $v_{1}\in D(\Omega )$, $v_{2}\in C_{\sharp }^{\infty }(Y^{\ast })/%
%TCIMACRO{\U{211d} }%
%BeginExpansion
\mathbb{R}
%EndExpansion
$, $c_{1}\in D(0,T)$ and $c_{2}\in C_{\sharp }^{\infty }(0,1)$.
\end{corollary}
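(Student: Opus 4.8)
The plan is to reproduce, in the perforated and time-dependent setting, the classical duality between the very weak two-scale limit of $\{\varepsilon ^{-1}u_{\varepsilon }\}$ and the two-scale limit of $\{\nabla u_{\varepsilon }\}$ provided by Theorem \ref{first in line}; compare \cite{Ho}, \cite{FHOLP} for the unperforated case.

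\textbf{An auxiliary corrector.} Since $v_{2}\in C_{\sharp }^{\infty }(Y^{\ast })/\mathbb{R}$ we have $\int_{Y^{\ast }}v_{2}\,dy=0$, so the Neumann cell problem $-\Delta _{y}\phi =v_{2}$ in $Y^{\ast }$, $\nabla _{y}\phi \cdot n=0$ on $\partial Y^{\ast }-\partial Y$, $\phi $ $Y$-periodic, has a solution $\phi \in H_{\sharp }^{1}(Y^{\ast })/\mathbb{R}$; as $v_{2}$ and $\partial Y^{H}$ are smooth, elliptic regularity gives $\phi \in C_{\sharp }^{\infty }(Y^{\ast })$. Put $w=-\nabla _{y}\phi \in C_{\sharp }^{\infty }(Y^{\ast })^{N}$, so that $\nabla _{y}\cdot w=v_{2}$ in $Y^{\ast }$ and $w\cdot n=0$ on $\partial Y^{\ast }-\partial Y$. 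Since $\nabla _{x}\cdot \left[ w\left( \tfrac{x}{\varepsilon }\right) \right] =\varepsilon ^{-1}v_{2}\left( \tfrac{x}{\varepsilon }\right) $, the product rule gives, in $\Omega _{\varepsilon }$,
\[
\varepsilon ^{-1}u_{\varepsilon }v_{1}(x)v_{2}\left( \tfrac{x}{\varepsilon }\right) =u_{\varepsilon }\,\nabla _{x}\cdot \left[ v_{1}w\left( \tfrac{x}{\varepsilon }\right) \right] -u_{\varepsilon }\,\nabla v_{1}\cdot w\left( \tfrac{x}{\varepsilon }\right) .
\]

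\textbf{Integration by parts and passage to the limit.} Multiplying by $c_{1}(t)c_{2}\left( \tfrac{t}{\varepsilon ^{r}}\right) $, integrating over $\Omega _{\varepsilon }\times (0,T)$, and integrating by parts in $x$ in the term containing $\nabla _{x}\cdot [v_{1}w(x/\varepsilon )]$, all boundary contributions vanish --- on $\partial \Omega $ because $v_{1}\in D(\Omega )$, and on $\partial \Omega _{\varepsilon }-\partial \Omega $ because $w\cdot n=0$ on the scaled perforation boundaries. Hence the left-hand side of (\ref{equation 1}) equals
\[
-\int_{0}^{T}\int_{\Omega _{\varepsilon }}\nabla u_{\varepsilon }\cdot v_{1}w\left( \tfrac{x}{\varepsilon }\right) c_{1}c_{2}\left( \tfrac{t}{\varepsilon ^{r}}\right) dxdt-\int_{0}^{T}\int_{\Omega _{\varepsilon }}u_{\varepsilon }\,\nabla v_{1}\cdot w\left( \tfrac{x}{\varepsilon }\right) c_{1}c_{2}\left( \tfrac{t}{\varepsilon ^{r}}\right) dxdt .
\]
In the first integral I apply Theorem \ref{first in line} with the admissible test field $v(x,t,y,s)=v_{1}(x)w(y)c_{1}(t)c_{2}(s)$ (any continuous periodic extension of $w$ to $Y$ will serve, since the limit carries the factor $\chi _{Y^{\ast }}(y)$), obtaining convergence to $\int_{0}^{T}\int_{\Omega }\int_{0}^{1}\int_{Y^{\ast }}(\nabla u+\nabla _{y}u_{1})\cdot v_{1}w\,c_{1}c_{2}$. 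In the second integral, the $(2,2)$-scale convergence $\widetilde{u_{\varepsilon }}\overset{2,2}{\rightharpoonup }u\chi _{Y^{\ast }}$ established within the proof of Theorem \ref{first in line} yields convergence to $\int_{0}^{T}\int_{\Omega }\int_{0}^{1}\int_{Y^{\ast }}u\,\nabla v_{1}\cdot w\,c_{1}c_{2}$.

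\textbf{Simplification of the limit.} The contributions coming from $\nabla u$ and from $\nabla v_{1}$ combine into $\nabla _{x}(uv_{1})\cdot w(y)$, whose $x$-integral vanishes for a.e. $t$ because $uv_{1}\in H_{0}^{1}(\Omega )$ has compact support in $\Omega $; so these terms drop out. For the surviving term I integrate by parts in $y$ over $Y^{\ast }$: using once more $w\cdot n=0$ on $\partial Y^{\ast }-\partial Y$, $Y$-periodicity on $\partial Y\cap \partial Y^{\ast }$, and $\nabla _{y}\cdot w=v_{2}$, one gets $\int_{Y^{\ast }}\nabla _{y}u_{1}\cdot w\,dy=-\int_{Y^{\ast }}u_{1}v_{2}\,dy$. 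Together with the overall minus sign in front, the limit becomes $\int_{0}^{T}\int_{\Omega }\int_{0}^{1}\int_{Y^{\ast }}u_{1}v_{1}v_{2}c_{1}c_{2}\,dy\,ds\,dx\,dt$, i.e. the right-hand side of (\ref{equation 1}), which completes the proof.

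The step I expect to be the main obstacle is the construction of the corrector $w$: solving the cell problem with the precise Neumann condition $w\cdot n=0$ on the perforation boundary and with sufficient regularity for $v_{1}(x)w(y)c_{1}(t)c_{2}(s)$ to be an admissible test field in Theorem \ref{first in line}, together with the careful verification that every boundary integral over the holes really does vanish in both integrations by parts. Once $w$ is available, the rest is a routine passage to the limit and bookkeeping, parallel to the unperforated situation.
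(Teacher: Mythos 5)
Your proof is correct and follows essentially the same route as the paper's: both solve the Neumann cell problem $\Delta_y\rho=v_2$ on $Y^{\ast}$ to write $v_2=\nabla_y\cdot w$ with $w\cdot n=0$ on the perforation boundary, trade the factor $\varepsilon^{-1}$ for an $x$-divergence, integrate by parts, pass to the limit via Theorem \ref{first in line}, cancel the $\nabla u$ and $\nabla v_1$ contributions through $\nabla_x(uv_1)$, and integrate back by parts in $y$. Your version is, if anything, slightly more explicit about the vanishing of the boundary terms on the holes and about why the macroscopic terms drop out.
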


\begin{proof}
First we note that any $v_{2}\in C_{\sharp }^{\infty }(Y^{\ast })/%
%TCIMACRO{\U{211d} }%
%BeginExpansion
\mathbb{R}
%EndExpansion
$ can be expressed as%
\begin{equation}
v_{2}^{{}}(y)=\bigtriangleup _{y}\rho (y)=\nabla _{y}\cdot \left( \nabla
_{y}\rho (y)\right)   \label{equation 2}
\end{equation}%
for some $\rho \in C_{\sharp }^{\infty }(Y^{\ast })/%
%TCIMACRO{\U{211d} }%
%BeginExpansion
\mathbb{R}
%EndExpansion
$. Furthermore, we note that we can find $\rho \in C_{\sharp }^{\infty
}(Y^{\ast })/%
%TCIMACRO{\U{211d} }%
%BeginExpansion
\mathbb{R}
%EndExpansion
$ through%
\begin{eqnarray*}
\bigtriangleup \rho (y) &=&v_{2}^{{}}(y)\text{, }y\in Y^{^{\ast }}\text{,} \\
\nabla _{y}\rho (y)\cdot n &=&0\text{, }y\in \partial Y^{^{\ast }}-\partial Y%
\text{.}
\end{eqnarray*}%
By (\ref{equation 2}) the left-hand side of (\ref{equation 1}) can be
expressed as%
\begin{eqnarray*}
&&\int_{0}^{T}\int_{\Omega _{\varepsilon }}\varepsilon ^{-1}u_{\varepsilon
}(x,t)v_{1}(x)c_{1}(t)c_{2}\left( \frac{t}{\varepsilon ^{r}}\right) \left(
\nabla _{y}\cdot \nabla _{y}\rho \right) \left( \frac{x}{\varepsilon }%
\right) dxdt \\
&=&\int_{0}^{T}\int_{\Omega _{\varepsilon }}u_{\varepsilon
}(x,t)v_{1}(x)c_{1}(t)c_{2}\left( \frac{t}{\varepsilon ^{r}}\right) \nabla
\cdot \left( \nabla _{y}\rho \left( \frac{x}{\varepsilon }\right) \right)
dxdt\text{.}
\end{eqnarray*}%
Integrating by parts with respect to $x$ we obtain%
\begin{eqnarray}
&&-\int_{0}^{T}\int_{\Omega _{\varepsilon }}\nabla u_{\varepsilon
}(x,t)v_{1}(x)c_{1}(t)c_{2}\left( \frac{t}{\varepsilon ^{r}}\right) \cdot
\nabla _{y}\rho \left( \frac{x}{\varepsilon }\right)   \label{equation 4} \\
&&+u_{\varepsilon }(x,t)\nabla v_{1}(x)c_{1}(t)c_{2}\left( \frac{t}{%
\varepsilon ^{r}}\right) \cdot \nabla _{y}\rho \left( \frac{x}{\varepsilon }%
\right) dxdt\text{.}  \notag
\end{eqnarray}%
Passing to the limit in the first term we get, up to a subsequence, that%
\begin{gather*}
-\int_{0}^{T}\int_{\Omega }\int_{0}^{1}\int_{Y^{\ast }}(\nabla u(x,t)+\nabla
_{y}u_{1}(x,t,y,s))v_{1}(x)c_{1}(t)c_{2}(s)\cdot \nabla _{y}\rho (y) \\
+u(x,t)\nabla v_{1}(x)c_{1}(t)c_{2}(s)\cdot \nabla _{y}\rho \left( y\right)
dydsdxdt\text{.}
\end{gather*}%
Integration by parts in the last term with respect to $x$, gives%
\begin{equation}
-\int_{0}^{T}\int_{\Omega }\int_{0}^{1}\int_{Y^{\ast }}\nabla
_{y}u_{1}(x,t,y,s)v_{1}(x)c_{1}(t)c_{2}(s)\cdot \nabla _{y}\rho \left(
y\right) dydsdxdt\text{.}  \notag
\end{equation}%
Finally, integrating by parts with respect to $y$ we obtain%
\begin{gather*}
\int_{0}^{T}\int_{\Omega }\int_{0}^{1}\int_{Y^{\ast
}}u_{1}(x,t,y,s)v_{1}(x)c_{1}(t)c_{2}(s)\nabla _{y}\cdot (\nabla _{y}\rho
(y))dydsdxdt \\
=\int_{0}^{T}\int_{\Omega }\int_{0}^{1}\int_{Y^{\ast
}}u_{1}(x,t,y,s)v_{1}(x)c_{1}(t)c_{2}(s)v_{2}(y)dydsdxdt\text{,}
\end{gather*}%
which is the right-hand side of (\ref{equation 1}).
\end{proof}

\section{Homogenization result}

\noindent We will investigate the parabolic problem with spatial and
temporal oscillations%
\begin{eqnarray}
\partial _{t}u_{\varepsilon }(x,t)-\nabla \cdot \left( A\left( \frac{x}{%
\varepsilon },\frac{t}{\varepsilon ^{2}}\right) \nabla u_{\varepsilon
}(x,t)\right) &=&f_{\varepsilon }(x,t)\text{ in }\Omega _{\varepsilon
}\times (0,T)\text{,}  \notag \\
u_{\varepsilon }(x,t) &=&0\text{ on }\partial \Omega \times (0,T)\text{,}
\label{see1} \\
A\left( \frac{x}{\varepsilon },\frac{t}{\varepsilon ^{2}}\right) \nabla
u_{\varepsilon }(x,t)\cdot n &=&0\text{ on }(\partial \Omega _{\varepsilon
}-\partial \Omega )\times (0,T)\text{,}  \notag \\
u_{\varepsilon }(x,0) &=&u_{\varepsilon }^{0}(x)\text{ in }\Omega
_{\varepsilon }\text{,}  \notag
\end{eqnarray}%
where$\ $%
\begin{equation}
\tilde{u}_{\varepsilon }^{0}\rightharpoonup u^{0}\text{ in }L^{2}(\Omega )
\label{view2}
\end{equation}%
and 
\begin{equation}
\tilde{f}_{\varepsilon }\rightharpoonup f\text{ in }L^{2}(\Omega \times
(0,T))\text{.}  \label{view3}
\end{equation}%
\ Moreover, we assume that

\noindent (H1) $A\in C_{\sharp }\left( \mathcal{Y}_{1,1}\right) ^{N\times N}$

\noindent (H2) $A(y,s)\xi \cdot \xi \geq \alpha \left\vert \xi \right\vert
^{2}$

\noindent for all $\left( y,s\right) \in 
%TCIMACRO{\U{211d} }%
%BeginExpansion
\mathbb{R}
%EndExpansion
^{N}\times 
%TCIMACRO{\U{211d} }%
%BeginExpansion
\mathbb{R}
%EndExpansion
,$ all $\xi \in 
%TCIMACRO{\U{211d} }%
%BeginExpansion
\mathbb{R}
%EndExpansion
^{N}$ and some $\alpha >0$.

\noindent We introduce the space 
\begin{equation*}
V_{\varepsilon }=\left\{ v\in H^{1}(\Omega _{\varepsilon })\left\vert v(x)=0%
\text{ on }\partial \Omega \right. \right\}
\end{equation*}%
\noindent with the $H^{1}(\Omega _{\varepsilon })$-norm.

\noindent Under these conditions the problem (\ref{see1}) allows a unique
solution $\left\{ u_{\varepsilon }\right\} $ bounded in $L^{\infty
}(0,T;L^{2}(\Omega _{\varepsilon }))$ and $L^{2}(0,T;V_{\varepsilon })$, see
Section 3 in \cite{DN}.

\noindent We are now prepared to prove the following theorem.

\begin{theorem}
\label{svarast}Let $\left\{ u_{\varepsilon }\right\} $ be a sequence\ of
solutions in $L^{2}(0,T;V_{\varepsilon })$ to (\ref{see1}). Then it holds
that 
\begin{equation*}
\widetilde{u_{\varepsilon }}(x,t)\overset{2,2}{\rightharpoonup }u(x,t)\chi
_{Y^{\ast }}\left( y\right)
\end{equation*}%
and 
\begin{equation*}
\widetilde{\nabla u_{\varepsilon }}(x,t)\overset{2,2}{\rightharpoonup }%
(\nabla u(x,t)+\nabla _{y_{1}}u_{1}(x,t,y,s))\chi _{Y^{\ast }}\left(
y\right) \text{,}
\end{equation*}%
where $(u,u_{1})\in L^{2}(0,T;H_{0}^{1}(\Omega ))\times L^{2}\left[ \Omega
\times (0,T)\times (0,1);\text{ }H_{\sharp }^{1}(Y^{\ast })/%
%TCIMACRO{\U{211d} }%
%BeginExpansion
\mathbb{R}
%EndExpansion
)\right] $\newline
is the unique solution of the following two-scale homogenized system:%
\begin{eqnarray}
\mu (Y^{\ast })\partial _{t}u(x,t)-\nabla \cdot (b\nabla u(x,t)) &=&f(x,t)%
\text{ in }\Omega \times (0,T)\text{,}  \notag \\
\text{\ \ \ }u(x,t) &=&0\text{ on }\partial \Omega \times (0,T)\text{,}
\label{view11} \\
u(x,0) &=&(\mu (Y^{\ast }))^{-1}u^{0}(x)\text{ in }\Omega \text{,}  \notag
\end{eqnarray}%
where%
\begin{equation*}
b\nabla u=\int_{0}^{1}\int_{Y^{\ast }}A(y,s)(\nabla u(x,t)+\nabla
_{y}u_{1}(x,t,y,s))dyds
\end{equation*}%
and $u_{1}$ solves our local problem%
\begin{equation}
\partial _{s}u_{1}(x,t,y,s)-\nabla _{y}\cdot (A(y,s)(\nabla u(x,t)+\nabla
_{y}u_{1}(x,t,y,s)))=0\text{ in }Y^{^{\ast }}\times (0,1)
\label{locproblem1}
\end{equation}%
\begin{equation}
(A(y,s)\left[ \nabla u(x,t)+\nabla _{y}u_{1}(x,t,y,s)\right] )\cdot n=0\text{
on }(\partial Y^{\ast }-\partial Y)\times (0,1)  \label{posrlocal}
\end{equation}%
for a.e. $(x,t)\in \Omega \times (0,T).$
\end{theorem}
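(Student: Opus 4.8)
The plan is the classical two-scale route: use the cited a priori bounds, extract a $(2,2)$-scale convergent subsequence via the machinery of Section~3, pass to the limit in the weak formulation of (\ref{see1}) against two families of oscillating test functions, and close with a uniqueness argument for the limit system that upgrades subsequential convergence to convergence of the whole sequence.

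I would start from the weak form of (\ref{see1}): for $\varphi\in L^{2}(0,T;V_{\varepsilon})$ with $\partial_{t}\varphi\in L^{2}(\Omega_{\varepsilon}\times(0,T))$ and $\varphi(\cdot,T)=0$,
\begin{equation*}
-\int_{0}^{T}\int_{\Omega_{\varepsilon}}u_{\varepsilon}\partial_{t}\varphi\,dxdt+\int_{0}^{T}\int_{\Omega_{\varepsilon}}A(\tfrac{x}{\varepsilon},\tfrac{t}{\varepsilon^{2}})\nabla u_{\varepsilon}\cdot\nabla\varphi\,dxdt=\int_{0}^{T}\int_{\Omega_{\varepsilon}}f_{\varepsilon}\varphi\,dxdt+\int_{\Omega_{\varepsilon}}u_{\varepsilon}^{0}\varphi(\cdot,0)\,dx.
\end{equation*}
The cited bounds make $\{u_{\varepsilon}\}$ bounded in $L^{2}(0,T;H^{1}(\Omega_{\varepsilon}))$, hence $\{\widetilde{u_{\varepsilon}}\}$ and $\{\widetilde{\nabla u_{\varepsilon}}\}$ are bounded in $L^{2}$. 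To verify hypothesis (\ref{name 3}) with $r=2$ I would take $\varphi=v_{1}(x)c_{1}(t)c_{2}(t/\varepsilon^{2})$ with $v_{1}\in D(\Omega)$, $c_{1}\in D(0,T)$, $c_{2}\in C_{\sharp}^{\infty}(0,1)$ in the identity above; since $\nabla\varphi$ and $c_{2}(t/\varepsilon^{2})$ are bounded, this shows $\varepsilon^{-2}\int\int u_{\varepsilon}v_{1}c_{1}\partial_{s}c_{2}(t/\varepsilon^{2})$ is bounded uniformly in $\varepsilon$, so $\int\int u_{\varepsilon}v_{1}c_{1}\partial_{s}c_{2}(t/\varepsilon^{2})=O(\varepsilon^{2})\to0$, and adding the trivially vanishing term $\varepsilon^{2}\int\int u_{\varepsilon}v_{1}\partial_{t}c_{1}\,c_{2}(t/\varepsilon^{2})$ gives exactly (\ref{name 3}). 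Then Theorem \ref{first in line} yields, along a subsequence,
\begin{equation*}
\widetilde{u_{\varepsilon}}\overset{2,2}{\rightharpoonup}u(x,t)\chi_{Y^{\ast}}(y),\qquad\widetilde{\nabla u_{\varepsilon}}\overset{2,2}{\rightharpoonup}(\nabla u+\nabla_{y}u_{1})\chi_{Y^{\ast}}(y),
\end{equation*}
with $u\in L^{2}(0,T;H^{1}(\Omega))$ and $u_{1}\in L^{2}[\Omega\times(0,T)\times(0,1);H_{\sharp}^{1}(Y^{\ast})/\mathbb{R}]$, and Corollary \ref{Corr1} gives the companion very weak $(2,2)$-scale convergence of $\varepsilon^{-1}u_{\varepsilon}$ to $u_{1}$. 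That $u$ actually lies in $L^{2}(0,T;H_{0}^{1}(\Omega))$ follows from $u_{\varepsilon}=0$ on $\partial\Omega$ together with the fact that the perforations do not meet $\partial\Omega$, exactly as in the non-perforated case.

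Next I would pass to the limit with two test function choices. With $\varphi=v_{1}(x)c_{1}(t)$, $v_{1}\in D(\Omega)$, $c_{1}\in C^{\infty}([0,T])$, $c_{1}(T)=0$: using $\widetilde{u_{\varepsilon}}\rightharpoonup\mu(Y^{\ast})u$ weakly in $L^{2}(\Omega\times(0,T))$ for the time term, the admissibility of $A(y,s)^{T}\nabla v_{1}(x)c_{1}(t)$ as a test function in (\ref{name22}) for the flux term (here (H1) is used), and (\ref{view2})--(\ref{view3}) for the data, one obtains the weak form of the macroscopic equation in (\ref{view11}) with $b\nabla u=\int_{0}^{1}\int_{Y^{\ast}}A(\nabla u+\nabla_{y}u_{1})\,dyds$ and $u(0)=(\mu(Y^{\ast}))^{-1}u^{0}$. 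With $\varphi=\varepsilon v_{1}(x)\psi(x/\varepsilon)c_{1}(t)c_{2}(t/\varepsilon^{2})$, $v_{1}\in D(\Omega)$, $\psi\in C_{\sharp}^{\infty}(Y^{\ast})/\mathbb{R}$, $c_{1}\in D(0,T)$, $c_{2}\in C_{\sharp}^{\infty}(0,1)$: all terms carrying an explicit factor $\varepsilon$ vanish in the limit; the term $-\varepsilon^{-1}\int\int u_{\varepsilon}v_{1}\psi(x/\varepsilon)c_{1}\partial_{s}c_{2}(t/\varepsilon^{2})$ converges, by Corollary \ref{Corr1}, to $-\int_{0}^{T}\int_{\Omega}\int_{0}^{1}\int_{Y^{\ast}}u_{1}v_{1}\psi c_{1}\partial_{s}c_{2}$, and $\int\int A\nabla u_{\varepsilon}\cdot v_{1}\nabla_{y}\psi(x/\varepsilon)c_{1}c_{2}(t/\varepsilon^{2})$ converges, by (\ref{name22}), to $\int_{0}^{T}\int_{\Omega}\int_{0}^{1}\int_{Y^{\ast}}A(\nabla u+\nabla_{y}u_{1})\cdot\nabla_{y}\psi\,v_{1}c_{1}c_{2}$. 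Varying $v_{1},c_{1},\psi,c_{2}$ (constants in $\psi$ contribute nothing since $u_{1}$ has zero $Y^{\ast}$-mean) gives, for a.e.\ $(x,t)$, the weak form of the local problem (\ref{locproblem1})--(\ref{posrlocal}): integrating by parts in $y$ against a $\psi$ that need not vanish on $\partial Y^{\ast}-\partial Y$ produces the Neumann condition there, and the $1$-periodicity in $s$ comes from $c_{2}$ being periodic.

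Finally I would prove uniqueness of the two-scale homogenized system (\ref{view11})--(\ref{posrlocal}) in the stated space. For the difference $(w,w_{1})$ of two solutions, test the macroscopic equation with $w$ over $\Omega\times(0,t)$ and the local equation with $w_{1}$ over $Y^{\ast}\times(0,1)$ and then over $\Omega\times(0,t)$; the $\partial_{t}$ term yields $\tfrac{\mu(Y^{\ast})}{2}\|w(t)\|_{L^{2}(\Omega)}^{2}$ and the $\partial_{s}$ term yields $0$ by $1$-periodicity in $s$. Adding the two identities, the flux contributions combine to $\int_{0}^{t}\int_{\Omega}\int_{0}^{1}\int_{Y^{\ast}}A(\nabla w+\nabla_{y}w_{1})\cdot(\nabla w+\nabla_{y}w_{1})$, so coercivity (H2) forces $\tfrac{\mu(Y^{\ast})}{2}\|w(t)\|^{2}+\alpha\|\nabla w+\nabla_{y}w_{1}\|^{2}\leq0$; hence $w\equiv0$ and, $w_{1}$ having zero $Y^{\ast}$-mean, $w_{1}\equiv0$. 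Existence of a solution is furnished by the limit $(u,u_{1})$ constructed above, so the system is well posed; uniqueness of the limit then removes the subsequence and gives the claimed convergence for the full sequence. The main obstacle is the time-derivative term: there is no usable bound on $\partial_{t}u_{\varepsilon}$ in any function space, so neither the verification of (\ref{name 3}) nor the passage to the limit in the $\varepsilon^{-1}\partial_{s}$ contribution is available by classical compactness — it is precisely the very weak multiscale convergence of Corollary \ref{Corr1} that rescues both steps, and arriving at the \emph{correct} resonant cell problem, parabolic in $(y,s)$, depends on tracking how the scaling $t/\varepsilon^{2}$ exactly balances $x/\varepsilon$.
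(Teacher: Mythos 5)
Your proof follows essentially the same route as the paper's: verify hypothesis (\ref{name 3}) with $r=2$ directly from the weak form of (\ref{see1}), invoke Theorem \ref{first in line} and Corollary \ref{Corr1}, and pass to the limit with the two families of test functions $v_{1}(x)c_{1}(t)$ and $\varepsilon v_{1}(x)v_{2}(x/\varepsilon)c_{1}(t)c_{2}(t/\varepsilon^{2})$ to obtain the macroscopic and local problems. The only difference is that you also spell out the energy-based uniqueness argument for the two-scale limit system and the recovery of the initial condition, steps the paper delegates to Remark \ref{sepvar} (citing \cite{Nan}) and to the remark following the proof (citing \cite{DN}).
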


\begin{remark}
\label{sepvar} Let us point out, that by the uniqueness of solutions to the
above system, the entire sequences $\left\{ u_{\varepsilon }\right\} ,$ $%
\left\{ \nabla u_{\varepsilon }\right\} $ two-scale converge (see \cite{Nan}%
). Two-scale homogenized system (\ref{view11}) can be decoupled to a
familiar type of homogenized system. Using the ansatz%
\begin{equation*}
u_{1}(x,t,y,s)=\nabla u(x,t)\cdot z(y,s)\text{,}
\end{equation*}%
the variable separated version of the local problem becomes%
\begin{equation*}
\partial _{s}z_{j}(y,s)-\nabla _{y}\cdot \left( A(y,s)(e_{j}+\nabla
_{y}z_{j}(y,s)\right) )=0\text{ in }Y^{\ast }\times (0,1)\text{,}
\end{equation*}%
where $j=1,\ldots ,N.$ Analogously for (\ref{posrlocal}), we obtain%
\begin{equation*}
A(y,s)\left( e_{j}+\nabla _{y}z_{j}(y,s)\right) \cdot n=0\text{ on }%
(\partial Y^{\ast }-\partial Y)\times (0,1)\text{.}
\end{equation*}%
We get the expression for the homogenized coefficients
\end{remark}

\begin{equation*}
b_{ij}=\int_{0}^{1}\int_{Y^{\ast
}}A_{ij}(y,s)+\sum_{k=1}^{N}A_{ik}(y,s)\partial _{y_{k}}z_{j}(y,s)dyds\text{.%
}
\end{equation*}

\begin{proof}
We carry out a homogenization procedure for (\ref{see1}). The corresponding
weak form states that we are searching for a unique $u_{\varepsilon }$ in $%
L^{2}(0,T;V_{\varepsilon })$ such that%
\begin{gather}
\int_{0}^{T}\int_{\Omega _{\varepsilon }}-u_{\varepsilon }(x,t)v(x)\partial
_{t}c(t)+A\left( \frac{x}{\varepsilon },\frac{t}{\varepsilon ^{2}}\right)
\nabla u_{\varepsilon }(x,t)\cdot \nabla v(x)c(t)dxdt  \label{weakform} \\
=\int_{0}^{T}\int_{\Omega _{\varepsilon }}f_{\varepsilon }(x,t)v(x)c(t)dxdt,%
\text{ }v\in V_{\varepsilon },\text{ }c\in D(0,T)\text{.}  \notag
\end{gather}%
We want to prove the weak form of the homogenized problem (\ref{view11}). To
see that (\ref{name 3}) is satisfied for $r>0$ and hence for $r=2$, we
conclude that for the choice of test functions in (\ref{name 3}): 
\begin{gather*}
-\varepsilon ^{r}\int_{0}^{T}\int_{\Omega _{\varepsilon }}u_{\varepsilon
}(x,t)v(x)\partial _{t}\left( c_{1}(t)c_{2}\left( \frac{t}{\varepsilon ^{r}}%
\right) \right) dxdt \\
=-\varepsilon ^{r}\int_{0}^{T}\int_{\Omega _{\varepsilon }}A\left( \frac{x}{%
\varepsilon },\frac{t}{\varepsilon ^{r}}\right) \nabla u_{\varepsilon
}(x,t)\cdot \nabla v(x) \\
\times c_{1}(t)c_{2}\left( \frac{t}{\varepsilon ^{r}}\right) +f_{\varepsilon
}(x,t)v(x)c_{1}(t)c_{2}\left( \frac{t}{\varepsilon ^{r}}\right)
dxdt\rightarrow 0\text{.}
\end{gather*}%
We choose $v(x,t)=v_{1}(x)c_{1}(t)$, $v_{1}\in D(\Omega ),$ $c_{1}\in D(0,T)$
in (\ref{weakform}). When passing to the limit and letting $\varepsilon
\rightarrow 0,$ we obtain through Theorem \ref{first in line} and (\ref%
{view3})%
\begin{gather}
\int_{0}^{T}\int_{\Omega }\int_{0}^{1}\int_{Y^{\ast
}}-u(x,t)v_{1}(x)\partial _{t}c_{1}(t)  \notag \\
+A(y,s)(\nabla u(x,t)+\nabla _{y}u_{_{1}}(x,t,y,s))\cdot \nabla
v_{1}(x)c_{1}(t)dydsdxdt  \label{equat1} \\
=\int_{0}^{T}\int_{\Omega }f(x,t)v_{1}(x)c_{1}(t)dxdt\text{.}  \notag
\end{gather}%
To find the local problem we choose the test functions in (\ref{weakform}) as%
\begin{eqnarray*}
v(x) &=&\varepsilon v_{1}(x)v_{2}\left( \frac{x}{\varepsilon }\right) \\
c(t) &=&c_{1}(t)c_{2}\left( \frac{t}{\varepsilon ^{2}}\right)
\end{eqnarray*}%
where $v_{1}\in $ $D(\Omega )$, $v_{2}\in C_{\sharp }^{\infty }(Y^{\ast })/%
%TCIMACRO{\U{211d} }%
%BeginExpansion
\mathbb{R}
%EndExpansion
,$ $c_{1}\in D(0,T)$ and $c_{2}\in C_{\sharp }^{\infty }(0,1)$. This gives
us 
\begin{gather}
\int_{0}^{T}\int_{\Omega _{\varepsilon }}-u_{\varepsilon }(x,t)\varepsilon
v_{1}(x)v_{2}\left( \frac{x}{\varepsilon }\right) \partial _{t}\left(
c_{1}(t)c_{2}\left( \frac{t}{\varepsilon ^{2}}\right) \right)  \notag \\
+A\left( \frac{x}{\varepsilon },\frac{t}{\varepsilon ^{2}}\right) \nabla
u_{\varepsilon }(x,t)\cdot \nabla \left( \varepsilon v_{1}(x)v_{2}\left( 
\frac{x}{\varepsilon }\right) \right) c_{1}(t)c_{2}\left( \frac{t}{%
\varepsilon ^{2}}\right) dxdt  \label{equat2} \\
=\int_{0}^{T}\int_{\Omega _{\varepsilon }}f_{\varepsilon }(x,t)\varepsilon
v_{1}(x)v_{2}\left( \frac{x}{\varepsilon }\right) c_{1}(t)c_{2}\left( \frac{t%
}{\varepsilon ^{2}}\right) dxdt\text{.}  \notag
\end{gather}%
Carring out the differentiations yields%
\begin{gather*}
\int_{0}^{T}\int_{\Omega _{\varepsilon }}-u_{\varepsilon
}(x,t)v_{1}(x)v_{2}\left( \frac{x}{\varepsilon }\right) \left( \varepsilon
\partial _{t}c_{1}(t)c_{2}\left( \frac{t}{\varepsilon ^{2}}\right)
+\varepsilon ^{-1}c_{1}(t)\partial _{s}c_{2}\left( \frac{t}{\varepsilon ^{2}}%
\right) \right) \\
+A\left( \frac{x}{\varepsilon },\frac{t}{\varepsilon ^{2}}\right) \nabla
u_{\varepsilon }(x,t)\cdot \left( \varepsilon \nabla v_{1}(x)v_{2}\left( 
\frac{x}{\varepsilon }\right) +v_{1}(x)\nabla _{y_{1}}v_{2}\left( \frac{x}{%
\varepsilon }\right) \right) c_{1}(t)c_{2}\left( \frac{t}{\varepsilon ^{2}}%
\right) dxdt \\
=\int_{0}^{T}\int_{\Omega _{\varepsilon }}f_{\varepsilon }(x,t)\varepsilon
v_{1}(x)v_{2}\left( \frac{x}{\varepsilon }\right) c_{1}(t)c_{2}\left( \frac{t%
}{\varepsilon ^{2}}\right) dxdt\text{.}
\end{gather*}%
If we keep only terms that do not tend to zero there remains%
\begin{gather}
\underset{\varepsilon \rightarrow 0}{\lim }\int_{0}^{T}\int_{\Omega
_{\varepsilon }}-\varepsilon ^{-1}u_{\varepsilon }(x,t)v_{1}(x)v_{2}\left( 
\frac{x}{\varepsilon }\right) c_{1}(t)\partial _{s}c_{2}\left( \frac{t}{%
\varepsilon ^{2}}\right)  \label{property1} \\
+A\left( \frac{x}{\varepsilon },\frac{t}{\varepsilon ^{2}}\right) \nabla
u_{\varepsilon }(x,t)\cdot v_{1}(x)\nabla _{y}v_{2}\left( \frac{x}{%
\varepsilon }\right) c_{1}(t)c_{2}\left( \frac{t}{\varepsilon ^{2}}\right)
dxdt=0\text{.}  \notag
\end{gather}%
So far, we know by the Corollary \ref{Corr1} that we can get%
\begin{gather}
\int_{0}^{T}\int_{\Omega _{\varepsilon }}-\varepsilon ^{-1}u_{\varepsilon
}(x,t)v_{1}(x)v_{2}\left( \frac{x}{\varepsilon }\right) c_{1}(t)\partial
_{s}c_{2}\left( \frac{t}{\varepsilon ^{2}}\right) dxdt  \notag \\
\rightarrow \int_{0}^{T}\int_{\Omega }\int_{0}^{1}\int_{Y^{\ast
}}-u_{1}(x,t,y,s)v_{1}(x)v_{2}(y)c_{1}(t)\partial _{s}c_{2}(s)dydsdxdt
\label{property2}
\end{gather}%
and by Theorem \ref{first in line}%
\begin{gather}
\int_{0}^{T}\int_{\Omega _{\varepsilon }}A\left( \frac{x}{\varepsilon },%
\frac{t}{\varepsilon ^{2}}\right) \nabla u_{\varepsilon }(x,t)\cdot
v_{1}(x)\nabla _{y}v_{2}\left( \frac{x}{\varepsilon }\right)
c_{1}(t)c_{2}\left( \frac{t}{\varepsilon ^{2}}\right) dxdt  \notag \\
\rightarrow \int_{0}^{T}\int_{\Omega }\int_{0}^{1}\int_{Y^{\ast
}}A(y,s)(\nabla u(x,t)+\nabla _{y}u_{1}(x,t,y,s))  \label{property3} \\
\cdot v_{1}(x)\nabla _{y}v_{2}\left( y\right) c_{1}(t)c_{2}\left( s\right)
dydsdxdt\text{.}  \notag
\end{gather}%
Hence, putting (\ref{property1}), (\ref{property2}) and (\ref{property3})
together we arrive at the local problem%
\begin{gather*}
\int_{0}^{T}\int_{\Omega }\int_{0}^{1}\int_{Y^{\ast
}}-u_{1}(x,t,y,s)v_{1}(x)v_{2}(y)c_{1}(t)\partial _{s}c_{2}(s) \\
+A(y,s)(\nabla u(x,t)+\nabla _{y}u_{1}(x,t,y,s)) \\
\cdot v_{1}(x)\nabla _{y}v_{2}\left( y\right) c_{1}(t)c_{2}\left( s\right)
dydsdxdt=0\text{,}
\end{gather*}%
which is the weak form of (\ref{locproblem1}). The proof is complete.
\end{proof}

\begin{remark}
The corresponding limit for the initial condition (\ref{view2}) is found in
the same way as in the proof of Theorem 4.1 in \cite{DN}.
\end{remark}

\begin{acknowledgement}
The author thanks Prof. Anders Holmbom, Mid Sweden University, \"{O}%
stersund, for the useful comments, remarks and also for assistance with the
proof of Proposition \ref{P2,2}. She is also immensely grateful to Dr. Lotta
Flod\'{e}n and Dr. Marianne Olsson Lindberg, Mid Sweden University, \"{O}%
stersund, for theirs insightful comments on an earlier version of the paper.
\end{acknowledgement}

\end{document}